\newcommand{\IR}{\mathbb{R}}
\newcommand{\IC}{\mathbb{C}}
\newcommand{\D}{\mathcal{D}}
\newcommand{\Inn}{\mathcal{I}nn}
\newcommand{\norm}[1]{\left\Vert#1\right\Vert}
\newtheorem{proposition}{Proposition}[section]
\newtheorem{lemma}[proposition]{Lemma}
\newtheorem{theorem}{Theorem}
\newtheorem{corollary}[proposition]{Corollary}
\theoremstyle{definition}
\theoremstyle{remark}
\newtheorem{remark}[proposition]{Remark}
\numberwithin{equation}{section}
\newcommand{\TR}{\hbox{TR}\, }
\newcommand{\tr}{\hbox{tr}\, }
  \newcommand{\tpc}[3]{\left\{#1,#2,#3\right\}}
\newtheorem{problem}{Problem}
 \newcommand{\pair}[2]{\langle{#1,#2}\rangle}
\def\J#1#2#3{ \left\{ #1,#2,#3 \right\} }
\begin{document}

\title{Triple derivations on von Neumann algebras}
\author{Robert Pluta}
\address{Department of Mathematics, University of Iowa, Iowa City, IA 52242-1419, USA}
\email{robert-pluta@uiowa.edu}
\author{Bernard Russo}
\address{Department of Mathematics, University of California, Irvine, CA 92697-3875, USA}
\email{brusso@math.uci.edu}

\date{\today}
\keywords{triple derivation, ternary weak amenability, von Neumann factor, commutator}

\begin{abstract}
It is well known that every derivation of a von Neumann algebra into itself is an inner derivation 
and that every derivation of a von Neumann algebra into its predual is inner.
It is less well known that every triple derivation (defined below) of a von Neumann algebra into itself is an inner triple derivation. 

We examine to what extent all triple derivations of a von Neumann algebra into its predual are inner.  This rarely happens but it comes close.
We prove a (triple)  cohomological characterization of finite factors and a zero-one law for factors.

 \end{abstract}
\maketitle
\tableofcontents
\section{Introduction}

Building on earlier work of Kadison \cite{Kadison66}, Sakai \cite{Sakai66} proved that
very derivation of a von Neumann algebra into itself is 
 inner.   
 Building on earlier work of Bunce and Paschke \cite{BunPas80}, Haagerup, on his way to proving that every C$^*$-algebra is weakly amenable,  showed in \cite{Haagerup83}
 that every derivation of a von Neumann algebra into its predual is inner.
 Thus the first  Hochschild cohomology groups $H^1(M,M)$ and $H^1(M,M_*)$ vanish for any von Neumann algebra $M$.

  It is also known that every triple derivation of a von Neumann algebra into itself is 
 an inner triple derivation (\cite[Theorem 2]{HoMarPerRus02}), and every Jordan derivation of a von Neumann algebra into itself is an inner Jordan derivation \cite[Theorem 3.10]{Upmeier80}.  
 In \cite{PerRus10}, a study was made of automatic continuity of triple derivations from a Banach Jordan triple system into a Banach Jordan triple module, and in \cite{HoPerRus12},    the study of ternary weak amenability in operator algebras and triples was initiated. Triple derivations and inner triple derivations into a triple module are defined and discussed in \cite{HoPerRus12} and in \cite{PerRus10}.\footnote{In this paper, we use the terms `triple' and `ternary' interchangeably, while mindful that in some quarters `triple' means  `Jordan triple' and `ternary' refers to an associative triple setting, such as a TRO (ternary ring of operators)}  However, this paper is essentially independent of  \cite{HoPerRus12} and \cite{PerRus10} as we shall only be concerned with these concepts for von Neumann algebras, with their associated Jordan structure.
 
 Among other things, it was shown in \cite{HoPerRus12} that
every commutative (real or complex) C$^*$-algebra $A$ is 
{\bf ternary weakly amenable},  that is, every triple derivation of $A$ into its dual $A^*$ is an inner triple derivation,  but the C$^*$-algebras $K(H)$ of all compact operators and $B(H)$ of all bounded operators on
an infinite dimensional Hilbert space $H$  do not have this property.


 Two consequences of \cite{HoPerRus12} are that  finite dimensional von Neumann algebras and abelian von Neumann algebras  have the  property that every triple derivation into the predual is an inner triple derivation, analogous to the Haagerup result.  We call this property {\bf normal ternary weak amenability}, and we show that it rarely holds in a general von Neumann algebra, but that it comes close. 
 
 Our main results are Theorems 1 and 2. If the set of inner triple derivations 
from a  von Neumann algebra $M$ into its predual is norm dense in the real vector space 
 of all triple derivations, then $M$ must be finite (Theorem~\ref{thm:0411131}(a)). The converse holds if $M$ acts on a separable Hilbert space, or is a factor (Theorem~\ref{thm:0411131}(b)).  For all  infinite factors,
the norm closure of the set of all inner triple derivations into the predual    has codimension 1 in the space of all triple derivations (Theorem~\ref{thm:0416131}), 
thus providing a  cohomological characterization of finite factors and a zero-one law for factors.

 Inner triple 
derivations  on a von Neumann algebra $M$  into its predual $M_*$ are closely related to the span of commutators of normal functionals with elements of $M$, denoted by $[M_*,M]$. 
 As noted above, for a finite factor of  type $I_n$, all triple derivations into the predual are inner triple derivations.
\ A consequence of Proposition~\ref{prop:0418131} and the  work of Dykema and Kalton \cite{DykKal05} is that no factor of type $II_1$ is normally ternary weakly amenable (Corollary~\ref{cor:0821131}), even though each triple derivation into the predual is a norm limit of inner triple derivations by Theorem~\ref{thm:0411131}.  

We also show that a finite countably decomposable von Neumann algebra $M$
of type $I_n$ is normally ternary weakly amenable   if and only if every element of $M_*$ of central trace zero belongs to $[M_*,M]$. In particular, for such an algebra $M$ with a faithful normal finite trace,  if every element of $M_*$ of  trace zero belongs to $[M_*,M]$ then $M$ is normally ternary weakly amenable.  



\section{Triple modules and triple derivations}


\smallskip
\subsection{Varieties of modules}
Let $A$ be an associative algebra. Let us recall that an
{\bf $A$-bimodule} is a vector space $X$, equipped with two
bilinear products $(a,x)\mapsto a x$ and $(a,x)\mapsto x a$ from
$A\times X$ to $X$ satisfying the following axioms: $$a (b x) = (a
b) x ,\ \ a (x b) = (a x) b, \hbox{ and, }(xa) b = x (a b),$$ for
every $a,b\in A$ and $x\in X$.

Let $A$ be a Jordan algebra, that is, a commutative algebra with product denoted by $\circ$ satisfying $a^2\circ (a\circ b)=a\circ (a^2\circ b)$. A {\bf Jordan $A$-module} is a
vector space $X$, equipped with two bilinear products
$(a,x)\mapsto a \circ x$ and $(a,x)\mapsto x \circ a$ from
$A\times X$ to $X$, satisfying: $$a \circ x = x\circ a,\ \ a^2
\circ (x \circ a) = (a^2\circ  x)\circ a, \hbox{ and, }$$ $$2(
(x\circ a)\circ  b) \circ a + x\circ (a^2 \circ b) = 2 (x\circ
a)\circ  (a\circ b) + (x\circ b)\circ a^2,$$ for every $a,b\in A$
and $x\in X$. \smallskip

In particular, any von Neumann algebra $M$ is a Jordan algebra under the product $a\circ b=(ab+ba)/2$\  and\ 
its  predual $M_*$ is an $M$-bimodule under the actions 
\[
a\varphi(x)=\varphi(xa)\ ,\  \varphi a(x)=\varphi(ax),\hbox{ for }\varphi\in M_*,a,x\in M.
\]
$M_*$ is also a  Jordan $M$-bimodule under the actions 
\[
a\circ \varphi(x)=\varphi(x\circ a)  =\varphi \circ a(x),\hbox{ for }\varphi\in M_*,a,x\in M.
\]

\smallskip

A complex (resp., real) {\bf Jordan triple} is a complex (resp., real)
vector space $E$ equipped with a  triple
product $$ E \times E \times E \rightarrow E\quad\quad
(xyz) \mapsto \J xyz $$
which is bilinear and symmetric in the outer variables and
conjugate linear (resp., linear) in the middle one and satisfying the
so-called \emph{``Jordan Identity''}:
$$
D(a,b) D(x,y) -  D(x,y) D(a,b) = D(D(a,b)x,y) - D(x,D(b,a)y),
$$
for all $a,b,x,y$ in $E$, where $D(x,y) z := \J xyz$. 

A von Neumann algebra $M$ is a complex Jordan triple under the triple product
$\{abc\}=(ab^*c+cb^*a)/2$ and its  predual $M_*$ is a  Jordan triple $M$-module, according to the general  definition\footnote{We do not need the general definition since we are only interested in the special case considered here of a von Neumann algebra and its Jordan structure. It is worth noting however that the triple module actions are linear in some variables and conjugate-linear in other variables.   For example, (\ref{eq module product dual
2}) is conjugate linear in $a,b$ and $\varphi$.} in \cite{PerRus10} and \cite{HoPerRus12}, if we define module actions as follows:

\begin{equation}\label{eq module product dual 1}   \J
ab{\varphi} (x) = \J {\varphi}ba (x) := \varphi \J bax
\end{equation} and
 \begin{equation}\label{eq module product dual
2} \J a{\varphi}b (x) := \overline{ \varphi \J axb },  \end{equation}
for every $ x,a,b\in M,\varphi\in M_*.$
In this case, it is a tedious exercise to show that the identity   
$$\J {a}{b}{\J cde} = \J{\J abc}de 
- \J c{\J bad}e +\J cd{\J abe},$$
which is equivalent to the Jordan identity above,
holds whenever exactly one of the elements belongs to $M_*$.

\medskip
\subsection{Varieties of derivations}
Let $X$ be a Banach $A$-bimodule over an (associative) Banach algebra $A$. A linear mapping $D : A \to X$ is said to be
a {\bf derivation} if $D(a b) = D(a) b + a D(b)$, for every $a,b$ in $A$. For emphasis we call this a  {\bf binary (or associative) derivation}. 
We denote the set of all continuous binary derivations from $A$ to $X$ by $\mathcal{D}_b(A,X)$ .

When $X$ is a Jordan Banach module over a Jordan Banach algebra $A$, a linear mapping $D : A \to X$ is said to be
a {\bf derivation} if $D(a \circ b) = D(a) \circ b + a \circ D(b)$, for every $a,b$ in $A$. For emphasis we call this a  {\bf Jordan derivation}. 
We denote the set of continuous Jordan derivations from $A$ to $X$ by  $\mathcal{D}_J(A,X)$.

In the setting of Jordan Banach triples, a {\bf triple} or {\bf ternary derivation} from a (real or complex)
Jordan Banach triple, $E,$ into a Banach triple $E$-module, $X$,
is a {\it conjugate} linear mapping $\delta: E \to X$ satisfying \begin{equation}\label{eq triple derivation} \delta \J abc =
\J {\delta (a)}bc +  \J a{\delta (b)}c + \J ab{\delta (c)},\end{equation} for every $a,b,c$ in $E$. 
We denote the set of all continuous
ternary derivations from $E$ to $X$ by $\mathcal{D}_t(E,X)$. 

The conjugate linearity (as opposed to linearity) of ternary derivations from a complex Jordan triple into a Jordan triple module is a reflection of the fact that a complex Jordan triple is not necessarily a Jordan triple module over itself (as explained in \cite{PerRus10} and \cite{HoPerRus12}).  This anomaly  has no effect on our results.

Let $X$ be a Banach $A$-bimodule over an associative Banach algebra $A$.
Given $x_{_0}$ in $X$, the mapping $D_{x_{_0}} : A \to X$,
$D_{x_{_0}} (a) = x_{_0} a - a x_{_0}$ is a bounded
(associative or binary) derivation. Derivations of this form are called
{\bf inner}.  We shall use the customary notation $\hbox{ad}\, x_0$ for these inner derivations.
\
The set of all inner derivations from $A$ to $X$
will be denoted by $\mathcal{I}nn_{b} (A,X).$

When $x_{_0}$ is an element in a Jordan Banach $A$-module, $X,$
over a Jordan Banach algebra, $A$, for each $b\in A$, the mapping
$$\delta_{x_{_0},b}=L(x_0)L(b)-L(b)L(x_0) : A \to X,$$ $$\delta_{x_{_0},b} (a) := (x_{_0}
\circ a)\circ b - (b \circ a)\circ x_{_0}, \ (a\in A),$$ is a
bounded derivation.  Here $L(x_0)$ (resp.\ $L(b)$) denotes the module action $a\mapsto x_0\circ a$ (resp.\   multiplication $a\mapsto b\circ a$).  Finite sums of derivations of this form are
called {\bf inner}. 
The set of all inner Jordan derivations from $A$ to
$X$ is denoted by
 $\mathcal{I}nn_{J} (A,X)$.

Let $E$ be a complex (resp., real)
Jordan triple and let $X$ be a triple $E$-module. For each $b\in E$ and each $x_{_0}\in X,$
it is known that
the mapping $$\delta=\delta(b,x_{_0})=L(b,x_0)-L(x_0,b): E  \to X,$$ defined by
\begin{equation}\label{eq:0308111}
\delta (a)=\delta(b,x_{_0}) (a):=\J{b}{x_{_0}}{a}-\J{x_{_0}}{b}{a} \ \ (a\in E),
\end{equation}
is a ternary derivation from $E$ into $X$.
Finite sums of derivations of the form $\delta(b,x_{_0})$ are
called {\bf inner triple derivations}.
The set of all inner ternary
derivations from $E$ to $X$
is denoted by
$\mathcal{I}nn_{t} (E,X)$. 
We shall only need this definition in the case that $E$ is a von Neumann algebra $M$ and $X$ is $M_*$, with module action defined by (\ref{eq module product dual 1}) and (\ref{eq module product dual 2}).\smallskip

The reader who is  interested in more details concerning the more general definitions of the concepts in this section can consult \cite{PerRus10,HoPerRus12}.

\section{Normal ternary weak amenability for factors}

We shall use the facts that if $M$ is a finite von Neumann algebra, then every element of $M$ of central trace zero is a finite sum of commutators (\cite[Theoreme 3.2]{FacdelaHar80},\cite[Theorem 1]{PeaTop69}), and if $M$ is  properly infinite (no finite central projections), then every element of $M$  is a finite sum of commutators (\cite{HalpernTAMS69},\cite[Theorem 1]{HalmosAJM52},\cite[Corollary to Theorem 8]{HalmosAJM54},\cite[Lemma 3.1]{BroPeaTopDuke68}).   Thus for any von Neumann algebra, we have $M=Z(M)+[M,M]$, where $Z(M)$ is the center of $M$ and $[M,M]$ is the set of finite sums of commutators in $M$.

\subsection{Cohomological characterization of finiteness}

Let $M$ be a  von Neumann algebra and consider the submodule $M_*\subset M^*$.  Then
\begin{equation}\label{eq:0822131}
{\mathcal D}_t(M,M_*)={\mathcal I}nn_b^*(M,M_*)\circ *+{\mathcal I}nn_t(M,M_*),
\end{equation}
where for a linear operator $D$, $D\circ *\, (x)=D(x^*)$.
This was  stated and proved for $M$ semifinite in \cite[Cor.\ 3.10]{HoPerRus12} but the same proof holds by using \cite[Theorem 4.1]{Haagerup83} in place of \cite[Theorem 3.2]{BunPas80}. 



Let $M$ be any von Neumann algebra and let 
$\phi_0$ be any fixed normal state. Then \begin{equation}\label{eq:0822132}
M_*=\{1\}_\perp+\IC\phi_0,
\end{equation} where
$$\{1\}_\perp=\{\psi\in M_*:\psi(1)=0\}.$$

\begin{lemma}\label{lem:0411131}
 If $M$ is a von Neumann algebra, then $\overline{[M_*,M]}=Z(M)_\perp$ (norm closure), where $Z(M)$ is the center of $M$. In particular, if $M$ is a factor, then $\overline{[M_*,M]}=\{1\}_\perp$.
\end{lemma}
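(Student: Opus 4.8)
The plan is to deduce the lemma from Banach-space duality between $M_*$ and its dual $(M_*)^*=M$, rather than from the explicit decomposition $M=Z(M)+[M,M]$ recalled at the start of the section. For a linear subspace $N\subseteq M_*$ one has the standard identity $\overline{N}={}^{\perp}(N^{\perp})$, where $N^{\perp}=\{a\in M:\varphi(a)=0\text{ for all }\varphi\in N\}$ is the annihilator of $N$ in $M$ and ${}^{\perp}S=\{\varphi\in M_*:\varphi(a)=0\text{ for all }a\in S\}$ is the preannihilator in $M_*$ of $S\subseteq M$. Applying this with $N=[M_*,M]$, the entire statement reduces to the single computation $[M_*,M]^{\perp}=Z(M)$.

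To carry out that computation I would unwind the bimodule actions. For $\varphi\in M_*$, $b\in M$, and a test element $a\in M$, the definitions $\varphi b(x)=\varphi(bx)$ and $b\varphi(x)=\varphi(xb)$ give
\[
(\varphi b-b\varphi)(a)=\varphi(ba)-\varphi(ab)=\varphi([b,a]).
\]
Hence $a\in[M_*,M]^{\perp}$ if and only if $\varphi([b,a])=0$ for every $\varphi\in M_*$ and every $b\in M$. Since $M_*$ separates the points of $M$, this is equivalent to $[b,a]=0$ for all $b\in M$, i.e.\ to $a\in Z(M)$. Thus $[M_*,M]^{\perp}=Z(M)$ exactly. (The forward inclusion $[M_*,M]\subseteq Z(M)_\perp$ is already visible from the displayed identity, since $[b,z]=0$ for central $z$.)

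Combining the two steps yields $\overline{[M_*,M]}={}^{\perp}(Z(M))=Z(M)_\perp$, which is the assertion; for a factor $Z(M)=\IC 1$, so $Z(M)_\perp=\{1\}_\perp$, giving the final sentence.

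The argument is short, so the only real care is bookkeeping: fixing the sign and order conventions in the module actions so that the commutator identity comes out as $(\varphi b-b\varphi)(a)=\varphi([b,a])$, and invoking the preannihilator-of-annihilator duality in the correct direction (closing in $M_*$, annihilating into $M=(M_*)^*$). If one preferred to avoid citing the abstract duality theorem, the same conclusion follows from Hahn--Banach: were there $\psi\in Z(M)_\perp$ not lying in $\overline{[M_*,M]}$, one could separate it by some $a\in M$ annihilating $[M_*,M]$ --- forcing $a\in Z(M)$ --- with $\psi(a)\neq0$, contradicting $\psi\in Z(M)_\perp$. In either form the proof does not require the deep commutator results on $[M,M]$ quoted above.
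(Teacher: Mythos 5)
Your proof is correct and is essentially the paper's own argument: the paper likewise shows that any $x\in M$ annihilating $[M_*,M]$ must satisfy $\varphi(bx-xb)=0$ for all $\varphi,b$, hence lies in $Z(M)$ and annihilates $Z(M)_\perp$, and then concludes by the same preannihilator--annihilator duality you invoke. The only difference is presentational — you make the bipolar identity $\overline{N}={}^{\perp}(N^{\perp})$ explicit, while the paper leaves it implicit.
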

\begin{proof}  It is clear that $[M_*,M]\subset Z(M)_\perp$.
If $x\in M$ satisfies $\pair{x}{[\varphi,b]}=0$ for all $\varphi\in M_*$ and $b\in M$, then $\varphi(bx-xb)=0$, and so $x$ belongs to the center of $M$ and $\pair{x}{Z(M)_\perp}=0$, proving the lemma.
\end{proof}

\begin{lemma}\label{lem:0411132}
 If $M$ is a properly infinite von Neumann algebra, if $\psi\in M_*$ 
  and if $D_\psi\circ *$ belongs to the norm closure of $\Inn_t(M,M_*)$, where $D_\psi=\hbox{ad}\, \psi$, then $\psi(Z(M))=0$, and in particular, $\psi(1)=0$.
\end{lemma}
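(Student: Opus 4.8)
The plan is to produce, for each central element $z\in Z(M)$, a bounded linear functional $\omega_z$ on $\mathcal{D}_t(M,M_*)$ that annihilates $\Inn_t(M,M_*)$ while satisfying $\omega_z(D_\psi\circ *)=2\psi(z)$. Since $\omega_z$ is norm continuous and vanishes on $\Inn_t(M,M_*)$, it vanishes on its norm closure, so the hypothesis $D_\psi\circ *\in\overline{\Inn_t(M,M_*)}$ forces $\psi(z)=0$. Taking $z=1$ gives the displayed conclusion $\psi(1)=0$, and letting $z$ range over the projections of $Z(M)$ (whose linear span is norm dense in $Z(M)$) yields $\psi(Z(M))=0$ by normality of $\psi$.

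Every candidate $\omega$ will have the evaluation form $\omega(\delta)=\sum_{k=1}^{n}\delta(a_k)(x_k)$ for a finite family $a_1,\dots,a_n,x_1,\dots,x_n\in M$; such a map is clearly bounded by $\big(\sum_k\|a_k\|\,\|x_k\|\big)\,\|\delta\|$. Using the module actions (\ref{eq module product dual 1}) and (\ref{eq module product dual 2}) and separating the part of $\varphi$ that enters linearly from the part that enters conjugate-linearly, a direct computation shows that on a generating inner derivation $\delta(b,\varphi)$ one obtains $\omega(\delta(b,\varphi))=\tfrac12\,\overline{\varphi(bS'+T'b)}-\tfrac12\,\varphi(bS+Tb)$, where $S=\sum_k a_k^*x_k$, $T=\sum_k x_k a_k^*$, $S'=\sum_k x_k^*a_k$ and $T'=\sum_k a_k x_k^*$. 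Consequently, if the family is chosen so that $S$ is central and $T=-S$, then (taking adjoints) $S'=S^*$ is central and $T'=-S'$, the two arguments become the commutators $[b,S]$ and $[b,S']$ with central entries, and $\omega(\delta(b,\varphi))=0$. Thus such an $\omega$ kills every generator, hence all of $\Inn_t(M,M_*)$. For the very same family, using $(D_\psi\circ *)(a)(x)=\psi(a^*x-xa^*)$, we get $\omega(D_\psi\circ *)=\psi\big(\sum_k(a_k^*x_k-x_k a_k^*)\big)=\psi(S-T)=2\psi(S)$.

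It remains to realize a prescribed central $z$ as such an $S$, and this is where proper infiniteness enters. The reduced algebra $Mz$ is again properly infinite (a finite central projection of $Mz$ would be a finite central projection of $M$ below $z$, hence $0$), so by the commutator results recalled at the beginning of this section, applied inside $Mz$, we may write $z=\sum_{k=1}^{m}(p_kq_k-q_kp_k)$ as a finite sum of commutators with $p_k,q_k\in Mz\subseteq M$. Now take the doubled family of $2m$ pairs $(a,x)\in\{(p_k^*,q_k):k\}\cup\{(q_k^*,-p_k):k\}$. Then $S=\sum_k a^*x=\sum_k(p_kq_k-q_kp_k)=z$ is central, and $T=\sum_k xa^*=\sum_k(q_kp_k-p_kq_k)=-z=-S$, exactly as required; hence $\omega_z(D_\psi\circ *)=2\psi(z)$, completing the argument.

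The one genuinely delicate point is this construction of the family, i.e.\ arranging $S$ central together with the \emph{sign-flipped} companion identity $T=-S$. A single pair, or a naive commutator representation, will not do, since $cd$ and $dc$ always have the same nonzero spectrum and so cannot be forced to be negatives of one another. The device that removes the obstruction is the symmetric doubling $(c,d)\mapsto\{(c,d),(d,-c)\}$, whose two contributions give $cd-dc$ to $S$ and $dc-cd$ to $T$; this converts any representation of $z$ as a finite sum of commutators into one for which $T=-S$, and it is precisely the availability of such a representation (in particular for $z=1$) that uses proper infiniteness. The remaining points are routine: checking the linear/conjugate-linear split in the computation of $\omega(\delta(b,\varphi))$, verifying that each $Mz$ is properly infinite, and upgrading vanishing of $\psi$ on central projections to vanishing on $Z(M)$.
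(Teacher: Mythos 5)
Your proof is correct, and it reorganizes the argument in a genuinely different way from the paper's. The paper works directly with an $\epsilon$-approximation of $D_\psi\circ *$ by a finite sum of generators, evaluates at pairs $(a,x)$, sets $x=1$ to eliminate the unwanted terms, and after a chain of estimates shows that $4\psi([a,x])$ is within $10\epsilon\|a\|\|x\|$ of $\bigl(\sum_j[\varphi_j,b_j]+\sum_j[\varphi_j^*,b_j^*]\bigr)([a,x])$; since elements of $[M_*,M]$ annihilate $Z(M)$ and $M=[M,M]$ by proper infiniteness, writing $z=\sum_k[a_k,x_k]$ and letting $\epsilon\to 0$ gives $\psi(z)=0$. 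You instead package everything into a single bounded real-linear functional $\omega_z(\delta)=\sum_k\delta(a_k)(x_k)$ that annihilates every generator $\delta(b,\varphi)$ \emph{exactly} --- thanks to the symmetric-doubling choice of the family, which forces $S$ central and $T=-S$ --- after which the conclusion is immediate from norm continuity. Both proofs rest on the same two pillars (central elements of a properly infinite algebra are finite sums of commutators, and commutator expressions vanish against central elements), but your version buys a cleaner logical structure: no $\epsilon$-bookkeeping, no $x=1$ substitution, and the one nontrivial device, the doubling $(c,d)\mapsto\{(c,d),(d,-c)\}$, is isolated and explained. Two small simplifications are available: the detour through $Mz$ is unnecessary, since $M=[M,M]$ already lets you write an arbitrary $z\in Z(M)$ as a finite sum of commutators in $M$, so you can treat general central $z$ directly and skip the passage from central projections to all of $Z(M)$; and the final extension step needs only the norm continuity of $\psi\in M_*$, not its normality.
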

\begin{proof}  
For $\epsilon>0$, there exist $\varphi_j\in M_*$ and $b_j\in M$ such that
$$\|D_\psi\circ *-\sum_{j=1}^n (L(\varphi_j,b_j)-L(b_j,\varphi_j))\|<\epsilon.$$

For $x,a\in M$, direct calculations yield
$$\left| \psi(a^*x-xa^*)-\frac{1}{2}\sum_{j=1}^n(\varphi_jb_j-b_j^*\varphi_j^*)(a^*x)
-\frac{1}{2}\sum_{j=1}^n(b_j\varphi_j-\varphi_j^*b_j^*)(xa^*)\right |<\epsilon\|a\|\|x\|.$$

We  set $x=1$  to get
\[
\left |\frac{1}{2}\sum_{j=1}^n(\varphi_jb_j-b_j^*\varphi_j^*)(a^*)
+\frac{1}{2}\sum_{j=1}^n(b_j\varphi_j-\varphi_j^*b_j^*)(a^*)\right |<\epsilon \|a\|,
\]
and therefore
$$
\left | \psi(a^*x-xa^*)-\frac{1}{2}
\sum_{j=1}^n(\varphi_jb_j-b_j^*\varphi_j^*)(a^*x-xa^*)\right | <2\epsilon \|a\|\|x\|,$$ for every $a,x\in M$, that is,
\begin{equation}\label{eq:511}\left |\psi([a,x])-\frac{1}{2}\sum_{j=1}^n(\varphi_jb_j-b_j^*\varphi_j^*)([a,x])\right |<2\epsilon\|a\|\|x\|,\end{equation}
and therefore
\begin{equation}\label{eq:512}
\left |\psi([a,x])-\frac{1}{2}\sum_{j=1}^n(\varphi_j^*b_j^*-b_j\varphi_j)([a,x])\right |<3\epsilon\|a\|\|x\|.
\end{equation}
Let us now write
$$\sum_j(\varphi_jb_j-b_j^*\varphi_j^*)=
\sum_{j=1}^n(\varphi_jb_j-b_j\varphi_j+b_j\varphi_j-\varphi_j^*b_j^*+\varphi_j^*b_j^*-b_j^*\varphi_j^*)$$
so that
$$2\psi-\sum_j(\varphi_jb_j-b_j^*\varphi_j^*)=$$
$$2\psi-\sum_j[\varphi_j,b_j]-\sum(b_j\varphi_j-\varphi_j^*b_j^*)+2\psi-2\psi-\sum_j[\varphi_j^*,b_j^*]$$
and
$$4\psi-\sum_j[\varphi_j,b_j]-\sum_j[\varphi_j^*,b_j^*]=$$
$$2\psi-\sum_j(\varphi_jb_j-b_j^*\varphi_j^*)+2\psi+\sum_j(b_j\varphi_j-\varphi_j^*b_j^*).$$
Thus, by (\ref{eq:511}) and (\ref{eq:512}),
$$
\left |4\psi([a,x])-\sum_j[\varphi_j,b_j]([a,x])-\sum_j[\varphi_j^*,b_j^*]([a,x])
\right |\le
$$
$$
\left | 2\psi([a,x])-\sum_j(\varphi_jb_j-b_j^*\varphi_j^*)([a,x])  \right |$$
$$+\left | 2\psi([a,x])+\sum_j(b_j\varphi_j-\varphi_j^*b_j^*)([a,x])  \right |<
10\epsilon\|a\|\|x\|.
$$
Since $[M,M]=M$, for any $z\in Z(M)$, if $z=\sum_k[a_k,x_k]$ we have 
$$
|4\psi(z)|\le 10\epsilon\sum_k\|x_k\|\|a_k\|,
$$ proving that $\psi(z)=0$.
\end{proof}

Variants of the argument used in the preceding proof will be used in Proposition~\ref{prop:0418131}(b) and in Proposition~\ref{prop:0326141}.
The proof of the following lemma is contained in \cite[Lemma 3.2]{HoPerRus12}.

\begin{lemma}\label{lem:0815131}
If $M$ is a von Neumann algebra, and $\psi\in M_*$ satisfies $\psi^*=-\psi$, then $D_\psi$ is a self-adjoint mapping.   Conversely, if $M$ is properly infinite and $D_\psi$ is self-adjoint, then $\psi^*=-\psi$.
\end{lemma}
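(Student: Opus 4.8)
The plan is to unwind the definitions, reduce the self-adjointness of $D_\psi$ to a single functional identity on commutators, and then read off both implications from that identity, with the proper infiniteness hypothesis entering only in the converse.

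First I would record exactly what the objects are. Since $D_\psi=\hbox{ad}\,\psi$ with $\psi\in M_*$, the bimodule actions $\psi a(x)=\psi(ax)$ and $a\psi(x)=\psi(xa)$ give
\[
D_\psi(x)(y)=(\psi x-x\psi)(y)=\psi(xy-yx)=\psi([x,y]),
\]
so $D_\psi$ is a linear map $M\to M_*$. On $M_*$ the relevant involution is $\psi^*(z)=\overline{\psi(z^*)}$, and ``$D_\psi$ is self-adjoint'' means $D_\psi(x^*)=(D_\psi(x))^*$ for every $x\in M$. Spelled out, the hypothesis $\psi^*=-\psi$ is the assertion that $\overline{\psi(u)}=-\psi(u^*)$ for all $u$, equivalently $\overline{\psi(w)}=\psi^*(w^*)$ in general.

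Next I would carry out the one genuine computation, comparing $D_\psi(x^*)$ with $(D_\psi(x))^*$. Using $(D_\psi(x))^*(y)=\overline{D_\psi(x)(y^*)}=\overline{\psi(xy^*-y^*x)}=\psi^*\big((xy^*-y^*x)^*\big)=-\psi^*([x^*,y])$, together with $D_\psi(x^*)(y)=\psi([x^*,y])$, self-adjointness becomes, after replacing $x^*$ by an arbitrary $a\in M$,
\[
(\psi+\psi^*)([a,y])=0\qquad\text{for all }a,y\in M.
\]
The only delicate point here is bookkeeping: tracking the two conjugations and the order reversal of the involution so as to land on this symmetric commutator form. Once the identity is in this shape, both directions are immediate.

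Finally I would deduce the two implications. If $\psi^*=-\psi$ then $\psi+\psi^*=0$, the displayed identity holds trivially, and $D_\psi$ is self-adjoint; this needs no hypothesis on $M$. Conversely, if $D_\psi$ is self-adjoint then $\psi+\psi^*$ annihilates every commutator, i.e. vanishes on $[M,M]$, and at this point I would invoke the fact recorded at the start of this section that a properly infinite $M$ satisfies $[M,M]=M$, forcing $\psi+\psi^*=0$, that is $\psi^*=-\psi$. The main (and essentially only) obstacle is this last step: for a general von Neumann algebra $[M,M]$ need not be all of $M$, and then $\psi+\psi^*$ could be a nonzero functional that annihilates every commutator, so the converse fails without extra structure; proper infiniteness, which yields $[M,M]=M$, is precisely what closes this gap.
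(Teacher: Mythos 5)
Your proof is correct and is essentially the argument the paper relies on: the paper defers to \cite[Lemma 3.2]{HoPerRus12}, but its later uses of the lemma (e.g.\ ``$\psi^*=-\psi$ on $[M,M]$'' in the proof of Theorem~\ref{thm:0411131}) show the content is exactly your identity that self-adjointness of $D_\psi$ is equivalent to $\psi+\psi^*$ vanishing on commutators, combined with $[M,M]=M$ for properly infinite $M$.
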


\begin{theorem}\label{thm:0411131}
Let $M$ be a von Neumann algebra.
\begin{description}
\item[(a)] If every triple derivation of $M$ into $M_*$ is approximated in norm by inner triple derivations, then $M$ is finite.
\item[(b)] If $M$ is a finite von Neumann algebra acting on a separable Hilbert space or if $M$ is a finite factor, then every triple derivation of $M$ into $M_*$ is approximated in norm by inner triple derivations.
\end{description}
\end{theorem}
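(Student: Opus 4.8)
The plan is to establish the two implications of Theorem~\ref{thm:0411131} separately, with part (a) being a direct application of Lemma~\ref{lem:0411132} and the relation~(\ref{eq:0822131}), while part (b) will require combining the structure theorem for triple derivations with the commutator-approximation theorems cited at the start of the section.

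For part (a), I would argue by contrapositive: suppose $M$ is \emph{not} finite, so it has a properly infinite direct summand. It suffices to produce a single triple derivation that cannot be approximated in norm by inner triple derivations. Fix a normal state $\phi_0$ with $\phi_0(1)=1$. Using the decomposition~(\ref{eq:0822131}), every triple derivation has the form $D_b^*\circ *+\delta$ with $\delta$ inner triple and $D_b=\mathrm{ad}\,\psi$ for some $\psi\in M_*$, so it is enough to find a $\psi$ for which $D_\psi\circ *$ is \emph{not} in the norm closure of $\Inn_t(M,M_*)$. The natural candidate is any $\psi$ with $\psi^*=-\psi$ (making $D_\psi\circ *$ a genuine triple derivation, via Lemma~\ref{lem:0815131}) and $\psi(1)\neq 0$ is impossible for skew-adjoint $\psi$, so instead I would look at the central values: by Lemma~\ref{lem:0411132}, if $D_\psi\circ *$ lay in that norm closure then $\psi(Z(M))=0$. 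Thus choosing $\psi$ skew-adjoint but not vanishing on the center of the properly infinite summand gives a triple derivation outside the closure, contradicting the hypothesis. The only subtlety is checking the skew-adjointness bookkeeping so that $D_\psi\circ *$ is actually a triple derivation and that such a $\psi$ exists, which it does since a properly infinite von Neumann algebra has nonzero center functionals that can be taken skew-adjoint.

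For part (b), the goal is the converse: assuming $M$ is finite (and either a factor or acting separably), I would show $\overline{\Inn_t(M,M_*)}=\mathcal D_t(M,M_*)$. Again invoking~(\ref{eq:0822131}), it suffices to show every $D_\psi\circ *$ with $\psi^*=-\psi$ is a norm limit of inner triple derivations. The engine here is the finiteness fact quoted at the start of Section~3: in a finite von Neumann algebra every element of central trace zero is a finite sum of commutators, and by Lemma~\ref{lem:0411131} we have $\overline{[M_*,M]}=Z(M)_\perp$. So for $\psi$ skew-adjoint, I would want to write $\psi$ (up to its central part) as a limit of elements of $[M_*,M]$ and then translate commutators $[\varphi,b]$ back into inner \emph{triple} derivations by unwinding the module-action formulas~(\ref{eq module product dual 1}) and~(\ref{eq module product dual 2}); this reverses the computation appearing in the proof of Lemma~\ref{lem:0411132}. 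The separability or factor hypothesis is what lets me control the central part: for a factor $Z(M)=\IC 1$ and $Z(M)_\perp=\{1\}_\perp$, so skew-adjointness forces $\psi(1)=0$ and $\psi\in\overline{[M_*,M]}$ directly; in the separable finite case one decomposes over the center via a direct integral or a measurable selection of traces.

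The main obstacle I anticipate is the passage from approximation by commutators $[\varphi,b]$ in $M_*$ to approximation by genuine inner triple derivations $\delta(b,\varphi)$, together with handling the central part when $M$ is not a factor. The algebraic identity relating $[\varphi,b]$ to the triple-derivation expression $\J b\varphi a-\J\varphi ba$ requires the skew-adjointness of $\psi$ and careful use of~(\ref{eq module product dual 2}) (which is conjugate-linear), so I expect the bulk of the work to be verifying that the norm-density of $[M_*,M]$ in $\{1\}_\perp$ (factor case) or $Z(M)_\perp$ (general finite case) transfers cleanly to density of $\Inn_t(M,M_*)$ in the space of triple derivations of the form $D_\psi\circ *$ with $\psi$ skew-adjoint. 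The finite-factor case should be essentially immediate once this dictionary is set up; the separable finite case is where the genuine difficulty lies, since there one must patch together the center-valued trace decomposition and argue that the central obstruction from Lemma~\ref{lem:0411132} vanishes precisely because $M$ is finite.
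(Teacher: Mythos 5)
Your overall architecture is right: part (a) via Lemma~\ref{lem:0411132} applied to a properly infinite summand, and part (b) via (\ref{eq:0822131}) plus a conversion of commutators into inner triple derivations. Part (a) as you describe it works (it is essentially the paper's argument, which simply takes $\psi=i\phi_0$ and contradicts $\psi(1)=0$). But there is a concrete error that you state twice and that actually breaks part (b): you claim that $\psi^*=-\psi$ forces $\psi(1)=0$. It does not. The condition $\psi^*=-\psi$ only forces $\psi(1)=\overline{\psi^*(1)}\cdot(-1)$, i.e.\ $\psi(1)\in i\IR$; the functional $\psi=i\phi_0$ for a normal state $\phi_0$ is skew-adjoint with $\psi(1)=i\neq 0$ (indeed this is exactly the witness used in part (a), which would be vacuous if your claim were true). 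Consequently, in the finite factor case a skew-adjoint $\psi$ need \emph{not} lie in $\{1\}_\perp=\overline{[M_*,M]}$, so your plan to ``write $\psi$ as a limit of elements of $[M_*,M]$'' fails for such $\psi$. The missing step is a normalization: write $\psi=i\lambda\,\tr+\varphi$ with $\varphi(1)=0$ ($\tr$ the tracial state), observe that $D_{\tr}=0$ because $\tr(xy)=\tr(yx)$ makes $\tr$ a central element of the bimodule $M_*$, and only then approximate $\varphi$ by skew-adjoint elements of $[M_*,M]$. A second, related omission: (\ref{eq:0822131}) hands you $D_\psi$ \emph{self-adjoint}, which by Lemma~\ref{lem:0815131} gives $\psi^*=-\psi$ only on $[M,M]$ when $M$ is finite; you must also replace $\psi$ by $\psi-\Re\psi(1)\,\tr$ to get skew-adjointness on all of $M=\IC 1+[M,M]$ before the decomposition into self-adjoint parts can be used.

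With those repairs your route through Lemma~\ref{lem:0411131} is viable and is in fact the mechanism the paper uses for Theorem~\ref{thm:0416131}; the paper's own proof of Theorem~\ref{thm:0411131}(b) instead approximates $\psi$ by functionals $\hat x=\tr(\,\cdot\,x)$ and invokes the Fack--de la Harpe/Pearcy--Topping theorem to write $x-\tr(x)1$ as a finite sum of commutators \emph{in $M$}, converting $[a,c]^{\widehat{\ }}$ into inner triple derivations; either way the conversion identity $[\xi,c]([x^*,y])=\tpc{\xi}{2c}{x}(y)-\tpc{2c}{\xi}{x}(y)$ for self-adjoint $\xi,c$ must be verified, and the norm estimate $\norm{D_\psi}\le 2\norm{\psi}$ is needed to pass approximation of $\psi$ to approximation of $D_\psi\circ *$. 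The direct-integral treatment of the separable non-factor case is sketched at the same level of detail as the paper, so I have no objection there beyond the fact that the same central-part normalization must be done fibrewise.
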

\begin{proof}

(a)  Assume that every triple derivation of $M$ into $M_*$ is a norm limit of inner such derivations and also assume for the moment that $M$ is properly infinite.  If $\psi\in M_*$ satisfies $\psi^*=-\psi$, then by Lemma~\ref{lem:0815131}, and (\ref{eq:0822131}),  $D_\psi\circ *\in \D_t(M,M_*)$. 
Then
 by Lemma~\ref{lem:0411132},   $\psi(1)=0$.   This is a contradiction if we take $\psi=i\phi_0$ where $\phi_0$ is any normal state of $M$. This proves that $M$ cannot be properly infinite.   

If $M$ is arbitrary, write $M=pM+(1-p)M$ for some central projection $p$, where $pM$ is finite and $(1-p)M$ is properly infinite.   It is easy to see that  if $\delta\in\D_t(M,M_*)$, then 
$p\delta\in \D_t(pM,(pM)_*)$ and similarly for $(1-p)\delta$ and that if $\Inn_t(M,M_*)$ is norm dense in $\D_t(M,M_*)$, then 
$\Inn_t(pM,(pM)_*)$ is norm dense in $\D_t(pM,(pM)_*)$, and $\Inn_t((1-p)M,((1-p)M)_*)$ is norm dense in $\D_t((1-p)M,((1-p)M)_*)$.  By the preceding paragraph, $1-p=0$, so that $M$ is finite.

(b)
Suppose first that $M$ is a finite factor.
Let $\psi\in M_*$ be such that  the inner derivation $D_\psi:x\mapsto \psi\cdot x-x\cdot\psi$, is self adjoint, that is, $D_\psi\in \Inn_b^*(M,M_*)$. By the proof of Lemma~\ref{lem:0815131} (namely, \cite[Lemma 3.2]{HoPerRus12}), $\psi^*=-\psi$ on $[M,M]$. Let us assume temporarily that $\psi(1)\in i\IR$, so that $\psi^*=-\psi$ on $M=\IC 1+[M,M]$. We also assume, temporarily, that $\psi=\hat x_\psi$  for some $x_\psi\in M$, that is, $\psi(y)=\tr(yx_\psi )$ for $y\in M$, where $\tr$ is a faithful normal finite trace on $M$.

We then have
\begin{equation}\label{eq:0323131}
x_\psi=\tr(x_\psi) 1+\sum_j[a_j+ib_j,c_j+id_j]
\end{equation}
where $a_j,b_j,c_j,d_j$ are self adjoint elements of $M$.
Expanding the right side of (\ref{eq:0323131}) and using the fact that $x_\psi^*=-x_\psi$, we have
\[
x_\psi=\tr(x_\psi) 1+\sum_j([a_j,c_j]-[b_j,d_j])
\]
so that
\[
\hat x_\psi=\tr(x_\psi) \tr(\cdot)+\sum_j([a_j,c_j]^{\widehat\  }-[b_j,d_j]^{\widehat\ }\, ).
\]
It is easy to check that for $a=a^*,b=b^*,x,y\in M$,
\[
[a,b]^{\widehat\ }([x^*,y])=\tpc{\hat a}{2b}{x}(y)-\tpc{2b}{\hat a}{x}(y).
\]
Thus
$$
D_\psi(x^*)(y)=\psi(x^*y-yx^*)=\tr\left(\sum_j\left([a_j,c_j]-[b_j,d_j]\right)[x^*,y]\right)
$$
so that \begin{equation}
D_\psi\circ *=\sum_j\left(L(\hat a_j,2c_j)-L(2c_j,\hat a_j)-L(\hat b_j,2d_j)+L(2d_j,\hat b_j)\right)\end{equation}
belongs to $ \Inn_t(M,M_*)$.

By replacing $\psi$ by $\psi^{\prime}=\psi-\Re\psi(1)\, \tr(\cdot)$, so that  $\psi'(1)\in i\IR$ and $D_\psi=D_{\psi^{\prime}}$, we now have that
if $\psi=\hat x_\psi$ for some $x_\psi\in M$, then $D_\psi\circ *
\in \Inn_t(M,M_*)$.  Since elements of the form $\hat x$ are dense in $M_*$ and $\norm{D_\psi}\le 2\norm{\psi}$, it follows that for every $\psi\in M_*$,  $D_\psi\circ *$ belongs to the norm closure of $\Inn_t(M,M_*)$.  From (\ref{eq:0822131}), $\Inn_t(M,M_*)$ is norm dense in 
$\mathcal{D}_t(A, A^*)$.  

Now suppose that $M$ is a finite von Neumann algebra acting on a separable Hilbert space, so that it admits a faithful normal finite trace $\tr$. Let $\psi\in M_*$ be such that  the inner derivation $D_\psi:x\mapsto \psi\cdot x-x\cdot\psi$, is self adjoint, that is, $D_\psi\in \Inn_b^*(M,M_*)$. As above, $\psi^*=-\psi$ on $[M,M]$.  Assuming temporarily that $\psi^*=-\psi$ on $Z(M)$, the above proof in the factor case shows that $\Inn_t(M,M_*)$ is norm dense in 
$\mathcal{D}_t(A, A^*)$.

To reduce to the case that $\psi^*=-\psi$ on $Z(M)$, we use direct integrals.
 Write $M$ as a direct integral of factors: $M=\int^\oplus_\Omega M(\omega)\, d\mu(\omega)$, where $M(\omega)$  is a finite factor with canonical trace $\tr_\omega$, which is an element of $M(\omega)_*$. 
 Since $M_*=\int^\oplus_\Omega M(\omega)_*\, d\mu(\omega)$ (\cite[p.285]{TakesakibookI}) we have $\psi=\int^\oplus_\Omega \psi(\omega)\, d\mu(\omega)$ for suitable $\psi(\omega)\in M(\omega)_*$.  It follows that 
if $x\in Z(M)$ then $\psi^*(x)=-\psi(x)$ if and only if $\tr_\omega(\psi(\omega))$ is purely imaginary for almost every $\omega$.  Here we are using the same notation for the extension of $\tr_\omega$ to a linear functional on $M(\omega)_*$   (\cite[p.174]{TakesakibookII}).
By replacing $\psi$ by $\psi^{\prime}$, where $\psi^{\prime}(\omega)=\psi(\omega)-\Re\tr_\omega(\psi(\omega))\tr_\omega$, so that $D_\psi=D_{\psi^{\prime}}$, we now have the desired result.
\end{proof}

Variants of the argument used in the preceding proof of (b) will be used in Theorem~\ref{thm:0416131} and Propositions~\ref{prop:0418131}(a) and  ~\ref{prop:0326141}.

\begin{corollary}
 If $M$ acts on a separable Hilbert space, or if $M$ is a factor, then $M$ is finite if and only if every triple derivation of $M$ into $M_*$ is approximated in norm by inner triple derivations. 
 \end{corollary}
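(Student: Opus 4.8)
The plan is to derive the corollary directly from the two halves of Theorem~\ref{thm:0411131}, since the asserted equivalence is exactly the conjunction of its two implications once the hypotheses are matched.

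For the implication that approximability forces finiteness, I would simply invoke Theorem~\ref{thm:0411131}(a). That part is stated for an arbitrary von Neumann algebra $M$, with no separability or factoriality assumption, so it applies without change under either alternative in the corollary's hypothesis: if every triple derivation of $M$ into $M_*$ is a norm limit of inner triple derivations, then $M$ is finite.

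For the converse, that finiteness forces approximability, I would invoke Theorem~\ref{thm:0411131}(b). Here the corollary's standing hypothesis is precisely what is needed to land in the scope of (b): if $M$ acts on a separable Hilbert space then a finite $M$ is a finite von Neumann algebra on a separable Hilbert space, and if $M$ is a factor then a finite $M$ is a finite factor. In either case (b) gives that every triple derivation of $M$ into $M_*$ is approximated in norm by inner triple derivations.

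The one point to watch---rather than a genuine obstacle---is the asymmetry between the two parts: part (a) is unconditional, so the separability-or-factor assumption is used only in the converse direction, to reduce the general finite case (handled in the proof of (b) via direct integrals over factors) to the two settings where the argument of Theorem~\ref{thm:0411131}(b) is available. Assembling the two implications yields the stated equivalence.
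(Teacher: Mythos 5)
Your proposal is correct and matches the paper's intent exactly: the corollary is stated immediately after Theorem~\ref{thm:0411131} with no separate proof, precisely because it is the conjunction of parts (a) and (b), with the separability-or-factor hypothesis needed only for the direction handled by (b). Your remark on the asymmetry of hypotheses between the two parts is accurate and is the only point requiring care.
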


\subsection{Zero-One law for factors}

\begin{theorem}\label{thm:0416131}  
If $M$ is a properly infinite factor, then the real vector space of triple derivations of $M$ into $M_*$, modulo the norm closure of the inner triple derivations,  has dimension 1.
\end{theorem}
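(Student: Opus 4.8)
The plan is to compute the quotient $\D_t(M,M_*)/\overline{\Inn_t(M,M_*)}$ outright and show it is one-dimensional over $\IR$. The starting point is the decomposition (\ref{eq:0822131}), which gives $\D_t(M,M_*)=\Inn_b^*(M,M_*)\circ *+\Inn_t(M,M_*)$. Since $M$ is properly infinite, Lemma~\ref{lem:0815131} identifies the self-adjoint inner binary derivations as precisely $\{D_\psi:\psi\in V\}$, where $V:=\{\psi\in M_*:\psi^*=-\psi\}$ is a real vector space. Hence every triple derivation is, modulo $\Inn_t(M,M_*)$, of the form $D_\psi\circ *$ with $\psi\in V$, so the real-linear map $\Phi\colon V\to\D_t(M,M_*)/\overline{\Inn_t(M,M_*)}$, $\Phi(\psi)=[D_\psi\circ *]$, is onto, and the quotient is isomorphic to $V/K$ with $K=\ker\Phi=\{\psi\in V:D_\psi\circ *\in\overline{\Inn_t(M,M_*)}\}$.

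Next I would pin down $V/K$ by a single linear functional. Because $\psi^*=-\psi$ forces $\overline{\psi(1)}=-\psi(1)$, the map $\psi\mapsto\psi(1)$ carries $V$ into $i\IR$; it is real-linear, and it is onto since $(i\phi_0)(1)=i$ for the fixed normal state $\phi_0$. Its kernel is $V_0:=\{\psi\in V:\psi(1)=0\}$, so $V/V_0\cong\IR$ is one-dimensional. Lemma~\ref{lem:0411132} gives the inclusion $K\subseteq V_0$: if $D_\psi\circ *$ lies in the closure of $\Inn_t(M,M_*)$ then $\psi(Z(M))=0$, hence $\psi(1)=0$ because $M$ is a factor. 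This already yields $\dim(V/K)\ge\dim(V/V_0)=1$, so the quotient is nonzero; the whole theorem reduces to the reverse inclusion $V_0\subseteq K$, which will give $K=V_0$ and $\dim(V/K)=1$.

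For $V_0\subseteq K$ I would first produce convenient generators. Since $M$ is a factor, Lemma~\ref{lem:0411131} gives $\overline{[M_*,M]}=\{1\}_\perp$, and this space is $*$-invariant; applying the continuous projection $\psi\mapsto\tfrac12(\psi-\psi^*)$ (which maps $\{1\}_\perp$ onto $V_0$) to a commutator and splitting $\phi=\phi_1+i\phi_2$, $c=c_1+ic_2$ into self-adjoint parts shows $\tfrac12([\phi,c]+[\phi^*,c^*])=[\phi_1,c_1]-[\phi_2,c_2]$. Thus the real linear span of the commutators $[\varphi,b]$ with $\varphi=\varphi^*\in M_*$ and $b=b^*\in M$ is norm dense in $V_0$. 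For such self-adjoint $\varphi,b$ I claim the exact identity
\[
D_{[\varphi,b]}\circ *=L(\varphi,2b)-L(2b,\varphi)\in\Inn_t(M,M_*),
\]
which is the precise analogue of the identity used in the proof of Theorem~\ref{thm:0411131}(b) but with the trace functional $\hat a$ replaced by an arbitrary self-adjoint normal functional. Granting it, real-linearity of $\psi\mapsto D_\psi\circ *$ together with the bound $\|D_\psi\circ *\|\le2\|\psi\|$ and the density just established force $D_\eta\circ *\in\overline{\Inn_t(M,M_*)}$ for every $\eta\in V_0$, i.e.\ $V_0\subseteq K$; combined with the previous paragraph this gives $K=V_0$ and the quotient has dimension exactly $1$.

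The single genuine computation, and the step I expect to be the main obstacle, is verifying the displayed identity. Here I would expand $(D_{[\varphi,b]}\circ *)(x)(y)=[\varphi,b]([x^*,y])=\varphi(b[x^*,y])-\varphi([x^*,y]b)$ and, on the other side, expand $(L(\varphi,2b)-L(2b,\varphi))(x)(y)$ using the module actions (\ref{eq module product dual 1}) and (\ref{eq module product dual 2}); the latter contributes a conjugated term $\overline{\varphi(\cdots)}$ that must be converted back using $\varphi^*=\varphi$ and $b^*=b$ before the four monomials on each side can be matched. The delicate part is exactly this bookkeeping of starred variables and of the conjugation built into (\ref{eq module product dual 2}); once the two expansions are seen to coincide term by term, the identity---and with it the theorem---follows.
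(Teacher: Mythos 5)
Your proposal is correct and takes essentially the same route as the paper's proof: both rest on the decomposition (\ref{eq:0822131}), Lemma~\ref{lem:0411131} (density of $[M_*,M]$ in $\{1\}_\perp$), Lemma~\ref{lem:0411132} (the obstruction $\psi(1)=0$), Lemma~\ref{lem:0815131}, and the identity $D_{[\varphi,b]}\circ *=L(\varphi,2b)-L(2b,\varphi)$ for selfadjoint $\varphi\in M_*$ and $b\in M$, which the paper likewise asserts by ``further calculation.'' Your packaging as the computation of $V/K$ with $K=V_0$, and your explicit justification via the projection $\psi\mapsto\tfrac12(\psi-\psi^*)$ of the paper's step ``we may assume $\varphi_\epsilon^*=-\varphi_\epsilon$,'' are purely presentational differences.
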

\begin{proof}
Let $D_\psi\in \Inn_b^*(M,M_*)$ so that again by
the proof of Lemma~\ref{lem:0815131} (namely, \cite[Lemma 3.2]{HoPerRus12}), since $M=[M,M]$, we have $\psi^*=-\psi$ and so $\psi(1)=i\lambda$ for some $\lambda\in \IR$.  Write, by (\ref{eq:0822132}),
\begin{equation}\label{eq:0411131}
\psi=\varphi+i\lambda \phi_0
\end{equation} with $\varphi(1)=0$.  By Lemma~\ref{lem:0411131}, for every $\epsilon>0$, there exist $\varphi_j\in M_*$ and $b_j\in M$, such that with $\varphi_\epsilon=\sum_j[\varphi_j,b_j]$, we have 
$\|\varphi-\varphi_\epsilon\|<\epsilon$.  Since $\varphi^*=-\varphi$ we may assume $\varphi_\epsilon^*=-\varphi_\epsilon$.

If we write $\varphi_j=\xi_j+i\eta_j$ and $b_j=c_j+id_j$ where $\xi_,,\eta_j,c_j,d_j$ are selfadjoint, then it follows from $\varphi_\epsilon^*=-\varphi_\epsilon$ that $$\varphi_\epsilon=\sum_j([\xi_j,c_j]-[\eta_j,d_j]).$$
Further calculation shows that for all $x\in M$,
\[
D_{\varphi_\epsilon}(x^*)=\sum_j(\tpc{\xi_j}{2c_j}{x}-\tpc{2c_j}{\xi_j}{x}-\tpc{\eta_j}{2d_j}{x}+\tpc{2d_j}{\eta_j}{x}).
\]
This shows that $D_{\varphi_\epsilon}\circ *\in \Inn_t(M,M_*)$ so that $D_\varphi\circ *$ belongs to the norm closure of $\Inn_t(M,M_*)$. 

According to (\ref{eq:0822131}), every $\delta\in \D_t(M,M_*)$ has the form $\delta=\delta_0+\delta_1$, where $\delta_0=D_\psi\circ *$ is selfadjoint, and $\delta_1\in\Inn_t(M,M_*)$ is the inner triple derivation $\frac{1}{2}L(\delta(1),1)-\frac{1}{2}L(1,\delta(1))$. 
Lemma~\ref{lem:0411132} shows now that the map
\[
\delta+\overline{\Inn_t(M,M_*)}\mapsto \lambda
\]
is an isomorphism $$\D_t(M,M_*)/\overline{\Inn_t(M,M_*)}\sim\IR,$$
where $\lambda$ is defined by (\ref{eq:0411131}).

Explicitly, we define a map $\Phi:\D_t(M,M_*)/\overline{\Inn_t(M,M_*)}\rightarrow \IR$ as follows.  If $\delta\in\D_t(M,M_*)$, 
say $\delta=D_\psi\circ *+\delta_1$ as above, and $[\delta]=\delta+\overline{\Inn_t(M,M_*)}$, let $\Phi([\delta])=-i\psi(1)\in \IR$.  It follows from Lemmas~\ref{lem:0815131} and \ref{lem:0411132} that
$\Phi$ is well defined,  and it is easily seen to be linear, onto and one to one.  

Explicitly, if $\lambda\in\IR$ and we let $\psi=i\lambda\phi_0$ where $\phi_0$ is any normal state, then $\Phi([D_\psi\circ *])=\lambda$.   Also, if $\Phi([\delta])=0$ where $\delta=D_\psi\circ *+\delta_1$, then $\psi(1)=0$ and by the first part of the proof, 
$D_\psi\circ *\in \overline{\Inn(M,M_*)}$, so that $\delta\in \overline{\Inn(M,M_*)}$.
\end{proof}

\begin{corollary}
 If $M$ is a factor,
the linear space of 
 triple derivations into the predual, modulo the \underline {norm closure} of the inner triple derivations, has dimension 0 or 1: It is zero if the factor is finite; and it is 1 if the factor is infinite.
\end{corollary}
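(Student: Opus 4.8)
The plan is to obtain this as a direct consequence of the two main results of the section, Theorem~\ref{thm:0411131}(b) and Theorem~\ref{thm:0416131}, once we record the structural dichotomy that makes the two cases exhaustive. The essential observation is that a factor $M$ has trivial center, $Z(M)=\IC 1$, so the only central projections are $0$ and $1$. Consequently, if $M$ fails to be finite, then $1$ is not a finite projection, and $M$ has no finite central projections other than $0$; that is, $M$ is properly infinite. Thus every factor is either finite or properly infinite, and there is no third possibility to handle.

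With this dichotomy in hand I would split into two cases. First, if $M$ is a finite factor, Theorem~\ref{thm:0411131}(b) applies and yields that every triple derivation of $M$ into $M_*$ is a norm limit of inner triple derivations, i.e.\ $\overline{\Inn_t(M,M_*)}=\D_t(M,M_*)$. Hence the quotient real vector space $\D_t(M,M_*)/\overline{\Inn_t(M,M_*)}$ is the zero space and has dimension $0$. Second, if $M$ is an infinite factor, then by the preceding paragraph $M$ is properly infinite, so Theorem~\ref{thm:0416131} applies directly and gives that $\D_t(M,M_*)/\overline{\Inn_t(M,M_*)}$ has dimension $1$. Combining the two cases establishes the stated zero–one law.

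There is no genuine obstacle here beyond verifying the dichotomy, and that step is routine: it is exactly the standard fact that a factor, having trivial center, admits only the trivial central projections, so ``infinite'' and ``properly infinite'' coincide for factors. Everything of substance has already been proved in Theorems~\ref{thm:0411131} and~\ref{thm:0416131}; the corollary merely assembles them into a single clean statement about the dimension of the cohomology-type quotient.
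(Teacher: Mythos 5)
Your proposal is correct and matches the argument the paper intends: the corollary is an immediate combination of Theorem~\ref{thm:0411131}(b) (finite factor case, quotient dimension $0$) and Theorem~\ref{thm:0416131} (properly infinite case, quotient dimension $1$), glued together by the standard fact that an infinite factor, having trivial center, is properly infinite. The paper leaves the corollary unproved precisely because this assembly is routine, and your write-up supplies exactly the missing dichotomy.
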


\section{Sums of commutators in the predual}


For any von Neumann algebra $M$, we shall write $(M_*)_0$ for the set of elements $\psi\in M_*$ such that $\psi(1)=0$ (in (\ref{eq:0822131}) we called this space $\{1\}_\perp$).  If $M$ is finite and admits a faithful normal finite trace $\tr$, which therefore extends to a trace on $M_*$ (\cite[p.174]{TakesakibookII}), then $(M_*)_0=\tr^{-1}(0)$.

\subsection{Factors of type $II_1$}

Recall that a finite factor $M$ of type I is both normally ternary weakly amenable and satisfies 
$(M_*)_0=[M_*,M]$.  For a finite factor of type II, the corresponding statements with $[M_*,M]$ and $\Inn_t(M,M_*)$ replaced
by their norm closures are also both true by Theorem~\ref{thm:0411131} and Lemma~\ref{lem:0411131}.  More is true in the type II case.

\begin{proposition}\label{prop:0418131}
Let $M$ be a finite von Neumann algebra. 
\begin{description}
\item[(a)] If $M$ acts on a separable Hilbert space or is a factor (hence admits a faithful normal finite trace), and if  $(M_*)_0=[M_*,M],
$ then $M$ is normally ternary weakly amenable.
\item[(b)] If $M$ is a factor and $M$ is normally ternary weakly amenable, then
$(M_*)_0=[M_*,M].
$
\end{description}
\end{proposition}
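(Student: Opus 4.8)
The plan is to run both implications through the decomposition (\ref{eq:0822131}), $\D_t(M,M_*)=\Inn_b^*(M,M_*)\circ *+\Inn_t(M,M_*)$, which reduces normal ternary weak amenability to deciding whether $D_\psi\circ *\in\Inn_t(M,M_*)$ for each self-adjoint inner binary derivation $D_\psi$. I would first observe that the hypothesis of (a) forces $M$ to be a factor: Lemma~\ref{lem:0411131} gives $[M_*,M]\subseteq Z(M)_\perp\subseteq (M_*)_0$, so $(M_*)_0=[M_*,M]$ collapses these to $Z(M)_\perp=(M_*)_0=\{1\}_\perp$, whence $Z(M)=\IC 1$. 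Consequently the ``separable Hilbert space'' alternative in (a) reduces to the factor case (the center being trivial, no direct-integral decomposition is needed), and a faithful normal finite trace $\tr$ with $\tr(1)=1$ is at hand.

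For (a), I would start from $D_\psi\in\Inn_b^*(M,M_*)$. As in Theorem~\ref{thm:0411131}(b), \cite[Lemma 3.2]{HoPerRus12} yields $\psi^*=-\psi$ on $[M,M]$. Because $\tr(\cdot)$ is central we have $D_{\tr(\cdot)}=0$, so I may replace $\psi$ by $\psi-\psi(1)\tr(\cdot)$ without changing $D_\psi$; this forces $\psi(1)=0$ and, since $\tr$ annihilates commutators, keeps $\psi^*=-\psi$ on $[M,M]$, whence (using $M=\IC 1+[M,M]$) $\psi$ is skew-adjoint and lies in $(M_*)_0$. The hypothesis now gives $\psi=\sum_j[\varphi_j,b_j]$ \emph{exactly}; symmetrizing by means of $[\varphi,b]^*=-[\varphi^*,b^*]$ and writing $\varphi_j=\xi_j+i\eta_j$, $b_j=c_j+id_j$ in self-adjoint parts produces $\psi=\sum_j([\xi_j,c_j]-[\eta_j,d_j])$, exactly as in Theorem~\ref{thm:0416131}. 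The single-commutator identity $D_{[\xi,c]}(x^*)=\J{\xi}{2c}{x}-\J{2c}{\xi}{x}$ (valid for self-adjoint $\xi\in M_*$, $c\in M$) then gives
\[
D_\psi\circ *=\sum_j\bigl(L(\xi_j,2c_j)-L(2c_j,\xi_j)-L(\eta_j,2d_j)+L(2d_j,\eta_j)\bigr)\in\Inn_t(M,M_*),
\]
now with genuine equality rather than the norm approximation of Theorem~\ref{thm:0411131}. By (\ref{eq:0822131}) every triple derivation is then inner, i.e.\ $M$ is normally ternary weakly amenable.

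For (b), where $M$ is a factor and normally ternary weakly amenable (hence finite by Theorem~\ref{thm:0411131}(a), so $M=\IC 1+[M,M]$ and a faithful normal finite trace exists), the inclusion $[M_*,M]\subseteq(M_*)_0$ is immediate from Lemma~\ref{lem:0411131}, so only $(M_*)_0\subseteq[M_*,M]$ is at issue. Since $[M_*,M]$ is a complex subspace and both the self-adjoint and the skew-adjoint part of a $\psi\in(M_*)_0$ again lie in $(M_*)_0$, it suffices to treat skew-adjoint $\psi\in(M_*)_0$. For such $\psi$, Lemma~\ref{lem:0815131} makes $D_\psi$ self-adjoint, so $D_\psi\circ *\in\D_t(M,M_*)$ by (\ref{eq:0822131}), and normal ternary weak amenability gives an exact expression $D_\psi\circ *=\sum_j(L(\varphi_j,b_j)-L(b_j,\varphi_j))$. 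Running the computation of Lemma~\ref{lem:0411132} with $\epsilon=0$ then yields $4\psi([a,x])=\sum_j[\varphi_j,b_j]([a,x])+\sum_j[\varphi_j^*,b_j^*]([a,x])$ for all $a,x\in M$; that is, $\psi$ and $\tau:=\tfrac14\sum_j([\varphi_j,b_j]+[\varphi_j^*,b_j^*])\in[M_*,M]$ agree on $[M,M]$. As $\psi(1)=\tau(1)=0$ and $M=\IC 1+[M,M]$, they agree on all of $M$, so $\psi=\tau\in[M_*,M]$.

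The step I expect to be the main obstacle is the exact bookkeeping shared by both parts: one must translate carefully between the triple-module actions (\ref{eq module product dual 1})--(\ref{eq module product dual 2}) and ordinary left/right commutators, tracking the adjoints $\varphi_j^*,b_j^*$, so that the skew-adjoint recombination $\sum_j([\xi_j,c_j]-[\eta_j,d_j])$ in (a) and the identification $\psi=\tau$ on $[M,M]$ in (b) hold on the nose. The only conceptually new point beyond Theorems~\ref{thm:0411131} and~\ref{thm:0416131} is that replacing their norm-closure hypotheses by the algebraic identity $(M_*)_0=[M_*,M]$ promotes each ``norm limit of inner triple derivations'' to an honest inner triple derivation; I would check most carefully that the $\epsilon=0$ specialization of Lemma~\ref{lem:0411132} really delivers membership in $[M_*,M]$ and not merely in its closure.
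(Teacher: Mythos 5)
Your proof is correct, and for the factor case it follows the paper's own argument almost step for step: the reduction via (\ref{eq:0822131}) to showing $D_\psi\circ *\in\Inn_t(M,M_*)$ for each self-adjoint inner binary derivation, the normalization of $\psi$ by a multiple of $\tr(\cdot)$ (the paper subtracts $\Re\psi(1)\tr$, you subtract $\psi(1)\tr$; both leave $D_\psi$ unchanged), the skew-adjoint recombination $\psi=\sum_j([\xi_j,c_j]-[\eta_j,d_j])$, and the identity $[\xi,c]([x^*,y])=\tpc{\xi}{2c}{x}(y)-\tpc{2c}{\xi}{x}(y)$ are exactly the paper's steps; your part (b) is likewise the paper's argument, merely packaged as the $\epsilon=0$ case of Lemma~\ref{lem:0411132} (a connection the paper itself flags in the remark following that lemma), and your check that $\psi$ and $\tau$ agree at $1$ and on $[M,M]$, hence on $M=\IC 1+[M,M]$, matches the paper's use of (\ref{eq:6810}) with $a=1$. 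The one genuine divergence is your opening observation in (a): since $[M_*,M]\subseteq Z(M)_\perp\subseteq\{1\}_\perp$ by (the easy half of) Lemma~\ref{lem:0411131}, the hypothesis $(M_*)_0=[M_*,M]$ forces $Z(M)_\perp=\{1\}_\perp$, and taking annihilators in $M$ of these two weak*-closed situations gives $Z(M)=\IC 1$. This is a valid and rather telling point: it shows the ``separable Hilbert space'' alternative in (a) is vacuous as literally stated, so your proof legitimately dispenses with the direct-integral reduction that the paper carries out for the non-factor case; the non-factor statement the authors presumably intend is the one with $Z(M)_\perp$, or $\TR^{-1}(0)$, in place of $(M_*)_0$, as in Proposition~\ref{prop:0326141}. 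The point you flagged as the likely obstacle --- that the $\epsilon=0$ specialization delivers genuine membership in $[M_*,M]$ rather than its closure --- does go through, since $\tau=\frac14\sum_j([\varphi_j,b_j]+[\varphi_j^*,b_j^*])$ is an honest finite sum of commutators.
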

\begin{proof}
(a) Assume first that $M$ is a factor.
 Suppose  that $(M_*)_0=[M_*,M]$.
Let $\psi\in M_*$ be such that  the inner derivation $D_\psi:x\mapsto \psi\cdot x-x\cdot\psi$, is self adjoint, that is, $D_\psi\in \Inn_b^*(M,M_*)$. By the proof of Lemma~\ref{lem:0815131}, $\psi^*=-\psi$ on $[M,M]$. Let us assume temporarily that $\psi(1)\in i\IR$, so that $\psi^*=-\psi$ on $M=\IC 1+[M,M]$. 

By our assumption, we then have (here $\tr$ is the extended trace, \cite[p.174]{TakesakibookII})
\begin{equation}\label{eq:0401131bis}
\psi=\tr(\psi) \tr+\sum_j[\xi_j+i\eta_j,c_j+id_j]
\end{equation}
where $\xi_j,\eta_j$ are self adjoint elements of $M_*$ and $c_j,d_j$ are self adjoint elements of $M$.
Expanding the right side of (\ref{eq:0401131bis}) and using the fact that $\psi^*=-\psi$, we have
\[
\psi=\tr(\psi) \tr+\sum_j([\xi_j,c_j]-[\eta_j,d_j]).
\]
It is easy to check that for $\xi=\xi^*\in M_*$ and $c=c^*,x,y\in M$,
\[
[\xi,c]([x^*,y])=\tpc{ \xi}{2c}{x}(y)-\tpc{2c}{\xi}{x}(y).
\]
Thus
 \begin{equation}
 D_\psi\circ *=\sum_j\left(L(\xi_j,2c_j)-L(2c_j,\xi)-L(\eta_j,2d_j)+L(2d_j,\eta_j)\right),\end{equation}
 which belongs to $\Inn_t(M,M_*)$.

By replacing $\psi$ by $\psi^{\prime}=\psi-\Re\psi(1)\, \tr(\cdot)$, so that $D_\psi=D_{\psi^{\prime}}$, we now have that
for every $\psi$,  $D_\psi\circ *
\in \Inn_t(M,M_*)$.    From (\ref{eq:0822131}), $\mathcal{D}_t(A, A^*)=\Inn_t(M,M_*)$ proving that $M$ is normally ternary weakly amenable.

The case where $M$ is not necessarily a factor, but acts on a separable Hilbert space is proved  using direct integrals as in the proof of Theorem~\ref{thm:0411131}(b).

(b) Suppose that  $M$ is a finite factor and that $M$ is normally ternary weakly amenable.  Let $\psi\in M_*$ with $\tr(\psi)=\psi(1)=0$.  Suppose first that $\psi^*=-\psi$ so that $D_\psi$ is self adjoint and therefore $D_\psi\circ *$  belongs to $\D_t(M,M)$.   By our assumption, there exist $\varphi_j\in M_*$ and $b_j\in M$ such that
$D_\psi\circ *=\sum_{j=1}^n (L(\varphi_j,b_j)-L(b_j,\varphi_j))$ on $M$.\smallskip

For $x,a\in M$, direct calculations yield
$$ \psi(a^*x-xa^*)=\frac{1}{2}\sum_{j=1}^n(\varphi_jb_j-b_j^*\varphi_j^*)(a^*x)
+\frac{1}{2}\sum_{j=1}^n(b_j\varphi_j-\varphi_j^*b_j^*)(xa^*).$$

We  set $x=1$  to get
\begin{equation}\label{eq:6810}
0=\frac{1}{2}\sum_{j=1}^n(\varphi_jb_j-b_j^*\varphi_j^*)(a^*)
+\frac{1}{2}\sum_{j=1}^n(b_j\varphi_j-\varphi_j^*b_j^*)(a^*),
\end{equation}
and therefore
\begin{equation}
\psi(a^*x-xa^*)=\frac{1}{2}
\sum_{j=1}^n(\varphi_jb_j-b_j^*\varphi_j^*)(a^*x-xa^*),\end{equation} for every $a,x\in M$.\smallskip

Since $M=\IC 1+[M,M]$ and $\psi(1)=0$ it follows from (\ref{eq:6810}) (with $a=1$) that 
$$\psi=\frac{1}{2}\sum_{j=1}^n(\varphi_jb_j-b_j^*\varphi_j^*)=\frac{1}{2}\sum_{j=1}^n(\varphi_j^*b_j^*-b_j\varphi_j).$$
Hence
\begin{eqnarray*}
2\psi&=&\sum_{j=1}^n(\varphi_jb_j-b_j\varphi_j+b_j\varphi_j-\varphi_j^*b_j^*+\varphi_j^*b_j^*-b_j^*\varphi_j^*)\\
&=&\sum_{j=1}^n[\varphi_j,b_j]-2\psi+\sum_{j=1}^n[\varphi_j^*,b_j^*],
\end{eqnarray*}
which shows that $\psi\in [M_*,M]$.

Now let $\psi\in (M_*)_0$ and  write $\psi=\psi_1+\psi_2$, where $\psi_1^*=\psi_1$ and $\psi_2^*=-\psi_2$. Since $0=\tr(\psi)=\tr(\psi_1)+\tr(\psi_2)$ and $\tr(\psi_1)=\psi_1(1)$ is real and $\tr(\psi_2)=\psi_2(1)$ is purely imaginary,  $\tr(\psi_1)=0=\tr(\psi_2)$.   By the previous paragraph, $i\psi_1,\psi_2\in [M_*,M]$ and so $\psi=-i(i\psi_1)+\psi_2\in [M_*,M]$, completing the proof.
\end{proof}

\begin{corollary}
Let $M$ be a factor of type $II_1$.  Then
$
M$ is normally ternary weakly amenable if and only if $(M_*)_0=[M_*,M].
$
\end{corollary}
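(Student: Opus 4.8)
The plan is to obtain this corollary as an immediate specialization of Proposition~\ref{prop:0418131}, with essentially no new content beyond a definitional observation. The one structural point to record first is that a factor of type $II_1$ is by definition a \emph{finite} factor of type II, and that a finite factor admits a faithful normal finite trace $\tr$. Thus $M$ satisfies exactly the standing hypotheses that are bundled into the word ``factor'' in both halves of Proposition~\ref{prop:0418131} (the parenthetical ``hence admits a faithful normal finite trace'' in part (a) applies, and part (b) only requires that $M$ be a factor).

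For the forward implication, I would assume $(M_*)_0=[M_*,M]$ and invoke Proposition~\ref{prop:0418131}(a): since $M$ is a factor, the factor clause of that hypothesis is met, and the assumed equality is precisely the remaining hypothesis, so the conclusion that $M$ is normally ternary weakly amenable follows verbatim. For the reverse implication, I would assume $M$ is normally ternary weakly amenable and invoke Proposition~\ref{prop:0418131}(b): $M$ is a factor, so part (b) yields $(M_*)_0=[M_*,M]$ directly.

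Since each of the two implications is a literal instance of one of the two halves of the proposition, no further calculation or argument is required, and there is no genuine obstacle here. The entire substance of the equivalence—the reduction of normal ternary weak amenability to the purely algebraic condition $(M_*)_0=[M_*,M]$—already resides in the proof of Proposition~\ref{prop:0418131}, which in the factor case supplies and uses the faithful normal finite trace. The corollary merely packages parts (a) and (b) into a single ``if and only if'' for the special case at hand, so the only thing I must check is the definitional fact that a type $II_1$ factor is a finite factor.
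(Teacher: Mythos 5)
Your proposal is correct and matches the paper exactly: the corollary is stated without proof precisely because it is the immediate conjunction of parts (a) and (b) of Proposition~\ref{prop:0418131}, once one notes that a type $II_1$ factor is a finite factor (hence admits a faithful normal finite trace). No further argument is needed.
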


After proving Proposition~\ref{prop:0418131},  we learned from Ken Dykema that  \cite[Theorem 4.6]{DykKal05} gives a necessary and sufficient condition, in terms of its spectral decomposition as a normal operator affiliated with $M$, for an element in $M_*$ (where $M$ is  a $II_1$ factor) to belong to $[M_*,M]$, and that the same holds for a factor of type $II_\infty$ by using \cite[Theorem 4.7]{DykKal05}.  Using that criteria, it can be shown that $(M_*)_0\ne [M_*,M]$.
Hence we have

\begin{corollary}\label{cor:0821131}
A factor of type $II_1$ is never normally ternary weakly amenable.
\end{corollary}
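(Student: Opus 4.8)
The final statement is Corollary~\ref{cor:0821131}: a factor of type $II_1$ is never normally ternary weakly amenable.

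The plan is to combine the characterization from Proposition~\ref{prop:0418131} with the Dykema--Kalton criterion. By Proposition~\ref{prop:0418131} (specifically the Corollary following it, for type $II_1$ factors), a type $II_1$ factor $M$ is normally ternary weakly amenable if and only if $(M_*)_0=[M_*,M]$. So it suffices to exhibit a single element $\psi\in M_*$ with $\psi(1)=0$ (equivalently $\tr(\psi)=0$, where $\tr$ is the unique normal tracial state extended to $M_*$) that is \emph{not} a finite sum of commutators $[\varphi,b]$ with $\varphi\in M_*$, $b\in M$. Equivalently, I want to show the inclusion $[M_*,M]\subseteq (M_*)_0$ from Lemma~\ref{lem:0411131} is strict.

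The key step is to invoke \cite[Theorem 4.6]{DykKal05}, which gives a necessary and sufficient spectral condition for an element of $M_*$ to lie in $[M_*,M]$. Concretely, an element $\psi\in M_*$ corresponds to a (generally unbounded) operator affiliated with $M$, and via the trace its spectral distribution can be described by a decreasing rearrangement or singular-value-type function $t\mapsto \mu_t(\psi)$ on $(0,1]$. The Dykema--Kalton membership criterion for $[M_*,M]$ is not merely integrability (which is automatic for $\psi\in M_*$, i.e.\ for $L^1$ elements) but an additional logarithmic-type summability condition on the distribution, of the form that the Ces\`aro-type average $\frac{1}{\log(1/t)}\int_t^1 \mu_s(\psi)\,ds$ converge (or remain bounded) as $t\to 0^+$. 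The plan is therefore to select $\psi\in (M_*)_0$ whose decreasing rearrangement behaves like $s^{-1}(\log(1/s))^{-\alpha}$ near $0$ for a suitable exponent $\alpha$ (e.g.\ $1<\alpha\le 2$) so that $\psi$ is genuinely in $L^1=M_*$ and has trace zero, yet fails the logarithmic summability condition, hence lies outside $[M_*,M]$. Such a $\psi$ can be constructed using the functional calculus for a fixed self-adjoint element affiliated with $M$ with diffuse spectrum, available since a $II_1$ factor has no minimal projections.

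The main obstacle is translating the Dykema--Kalton criterion, which is stated abstractly in terms of the spectral/distributional data, into a concrete witness $\psi$ and verifying both that $\psi\in (M_*)_0$ (trivial once $\tr(\psi)=0$ is arranged, and $L^1$-membership is checked against the rearrangement) and that it violates the exact summability threshold from \cite[Theorem 4.6]{DykKal05}. One must be careful that the criterion is about the symmetric/commutator structure, so the constructed $\psi$ should be taken self-adjoint (or handled via its real and imaginary parts, exactly as in the proof of Proposition~\ref{prop:0418131}(b)) to align with the real-linear decomposition used there. Once a single such $\psi\in (M_*)_0\setminus[M_*,M]$ is produced, the strict inequality $(M_*)_0\ne[M_*,M]$ follows, and the Corollary to Proposition~\ref{prop:0418131} immediately yields that $M$ is not normally ternary weakly amenable, completing the proof.
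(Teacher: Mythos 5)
Your proposal follows the paper's argument exactly: reduce, via the Corollary to Proposition~\ref{prop:0418131}, to showing $(M_*)_0\ne[M_*,M]$ for a $II_1$ factor, and then invoke the Dykema--Kalton criterion \cite[Theorem 4.6]{DykKal05} to exhibit an element of $(M_*)_0$ outside $[M_*,M]$. The paper leaves that last application as an assertion credited to Dykema, so your sketch of an explicit witness goes beyond what the paper records; just note that the precise logarithmic-averaging condition in \cite[Theorem 4.6]{DykKal05} must be consulted before committing to a particular decreasing-rearrangement profile such as $s^{-1}(\log(1/s))^{-\alpha}$.
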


No infinite factor can be approximately normally ternary weakly amenable by 
Theorem~\ref{thm:0411131}, much less normally ternary weakly amenable.  
As for  the case of a factor $M=B(H)$ of type $I_\infty$, we also have $(M_*)_0\ne [M_*,M]$, due to the work of Gary Weiss: \cite[Main Theorem]{Weiss86},\cite[Theorem 10]{Weiss80},\cite[Theorem 2.1]{Weiss04}(See Problem~4).  

 
\subsection{Finite von Neumann algebras of type I}

Now let $M$ be a finite von Neumann algebra of type $I_n$, with $n<\infty$. Then we can assume $$M=L^\infty(\Omega, \mu, M_n(\IC))=M_n(L^\infty(\Omega,\mu)),$$ $$ M_*=L^1(\Omega, \mu,M_n(\IC)_*))=M_n(L^1(\Omega,\mu))$$ and
$$Z(M)=L^\infty(\Omega,\mu)1.$$

It is known that the center valued trace is given by
$$\hbox{TR}\, (x)=\frac{1}{n}(\sum_1^n x_{ii})1\quad , \quad \hbox{ for }x=[x_{ij}]\in M
$$ (see \cite[p.1405]{Pearcy62} or \cite[p.65]{FacdelaHar80}).  We thus define, for a finite von Neumann algebra of type $I_n$ which has a faithful normal finite trace $\tr$,
$$\hbox{TR}\, (\psi)=\frac{1}{n}(\sum_1^n \psi_{ii})\tr\quad , \quad \hbox{ for }\psi=[\psi_{ij}]\in M_*.
$$

By modifying slightly the proof of Proposition~\ref{prop:0418131}  we have the following proposition. First we note the following elementary remark.

\begin{remark}
Let $M$ be a finite von Neumann algebra of type $I_n$ admitting a faithful normal finite trace and let $\psi\in M_*$.
\begin{description}
\item[(a)] If  $\TR(\psi)=0$, then $\psi$ vanishes on the center $Z(M)$ of $M$. 
\item[(b)] $\psi^*=-\psi$ on $Z(M)$ if and only if $\tr(\psi(\omega))$ is purely imaginary for almost every $\omega$.
\end{description}
\end{remark}
\begin{proof} If $x\in Z(M)$, then $x=f\cdot 1$ with $f\in L^\infty$ and 
\begin{eqnarray*}
\psi(x)&=& \int_\Omega\langle \psi(\omega),x(\omega)\rangle\, d\mu(\omega)=\int_\Omega\tr(\psi(\omega)x(\omega))\, d\mu(\omega)\\
&=&\int_\Omega\tr([\sum_k\psi_{ik}(\omega)x_{kj}(\omega)])\, d\mu(\omega)\\
&=&\int_\Omega(\sum_i\sum_k\psi_{ik}(\omega)x_{ki}(\omega))\, d\mu(\omega)\\
&=&\int_\Omega(\sum_k\psi_{kk}(\omega)x_{kk}(\omega))\, d\mu(\omega)\\
&=&\int_\Omega(\sum_k\psi_{kk}(\omega))f(\omega)\, d\mu(\omega)\\
&=&\int_\Omega(\TR \psi)(\omega) f(\omega)\, d\mu(\omega),
\end{eqnarray*}
proving (a).  As for (b), use $\psi(f\cdot 1)=\int_\Omega f(\omega)\tr(\psi(\omega))\, d\mu(\omega).$\end{proof}

\begin{proposition}\label{prop:0326141}
Let $M$ be a finite von Neumann algebra of type $I_n$ with $n<\infty$, which admits a faithful normal finite trace (equivalently, $M$ is countably decomposable, also called $\sigma$-finite).  Then $M$ is normally ternary weakly amenable if and only if $$\hbox{TR}^{-1}(0)=[M_*,M].$$  
\end{proposition}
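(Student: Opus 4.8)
The plan is to run the argument of Proposition~\ref{prop:0418131} almost verbatim, replacing the scalar trace $\tr$ and the scalar condition $\psi(1)=0$ by the center-valued trace $\TR$ and the condition $\TR(\psi)=0$, and exploiting the explicit form $M=M_n(L^\infty(\Omega,\mu))$ together with the preceding Remark. Throughout, equation~(\ref{eq:0822131}) reduces normal ternary weak amenability to the single assertion that $D_\psi\circ *\in\Inn_t(M,M_*)$ whenever $D_\psi$ is self-adjoint. It is also worth recording at the outset the free inclusion $[M_*,M]\subseteq\TR^{-1}(0)$: Lemma~\ref{lem:0411131} gives $[M_*,M]\subseteq Z(M)_\perp$, and part~(a) of the Remark identifies $Z(M)_\perp$ with $\TR^{-1}(0)$. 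Hence only the reverse inclusion $\TR^{-1}(0)\subseteq[M_*,M]$ carries content, and this is what the two implications will exploit (for the converse) or assume (for the forward direction).

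For the forward implication (assume $\TR^{-1}(0)=[M_*,M]$), I take any $\psi$ with $D_\psi$ self-adjoint, so $\psi^*=-\psi$ on $[M,M]$ by the proof of Lemma~\ref{lem:0815131}. Assuming temporarily that $\tr_\omega(\psi(\omega))$ is purely imaginary a.e.\ (equivalently $\psi^*=-\psi$ on $Z(M)$, by part~(b) of the Remark), $\psi$ is skew-adjoint on all of $M=Z(M)+[M,M]$. The central trace functional $\TR(\psi)$ is itself a trace on $M$, being a central function times the fiberwise trace, so $D_{\TR(\psi)}=0$ and $D_\psi=D_{\psi-\TR(\psi)}$; moreover $\psi-\TR(\psi)$ vanishes on $Z(M)$, hence lies in $\TR^{-1}(0)=[M_*,M]$ and is skew-adjoint. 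Writing $\psi-\TR(\psi)=\sum_j[\varphi_j,b_j]$, splitting $\varphi_j=\xi_j+i\eta_j$ and $b_j=c_j+id_j$ into self-adjoint parts, and discarding the self-adjoint cross terms by skew-adjointness reduces the sum to $\sum_j([\xi_j,c_j]-[\eta_j,d_j])$, exactly as in Proposition~\ref{prop:0418131}(a). The identity $[\xi,c]([x^*,y])=\tpc{\xi}{2c}{x}(y)-\tpc{2c}{\xi}{x}(y)$ then exhibits $D_\psi\circ *=\sum_j(L(\xi_j,2c_j)-L(2c_j,\xi_j)-L(\eta_j,2d_j)+L(2d_j,\eta_j))\in\Inn_t(M,M_*)$. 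To drop the temporary hypothesis, replace $\psi$ by $\psi'$ with $\psi'(\omega)=\psi(\omega)-\Re\tr_\omega(\psi(\omega))\tr_\omega$; the correction is a real central trace functional, so $D_{\psi'}=D_\psi$ and thus $D_\psi\circ *\in\Inn_t(M,M_*)$. Since this holds for every self-adjoint $D_\psi$, equation~(\ref{eq:0822131}) yields $\D_t(M,M_*)=\Inn_t(M,M_*)$.

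For the converse, suppose $M$ is normally ternary weakly amenable and take $\psi\in\TR^{-1}(0)$, which vanishes on $Z(M)$ by part~(a) of the Remark. Split $\psi=\psi_1+\psi_2$ with $\psi_1^*=\psi_1$ and $\psi_2^*=-\psi_2$; since $Z(M)$ is spanned by its self-adjoint elements, both $\psi_1$ and $\psi_2$ vanish on $Z(M)$, i.e.\ $\TR(\psi_1)=\TR(\psi_2)=0$. It therefore suffices to treat a skew-adjoint $\chi\in\TR^{-1}(0)$ and apply the conclusion to $i\psi_1$ and to $\psi_2$. For such $\chi$, the map $D_\chi$ is self-adjoint, so $D_\chi\circ *\in\D_t(M,M_*)$ and, by hypothesis, equals an honest inner triple derivation $\sum_j(L(\varphi_j,b_j)-L(b_j,\varphi_j))$. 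The computation of Proposition~\ref{prop:0418131}(b), specialized at $x=1$, forces $\sum_j(b_j\varphi_j-\varphi_j^*b_j^*)=-\sum_j(\varphi_jb_j-b_j^*\varphi_j^*)$, whence $\chi$ agrees on $[M,M]$ with $\tfrac12\sum_j(\varphi_jb_j-b_j^*\varphi_j^*)$, and the same rearrangement as in that proof gives $4\chi=\sum_j[\varphi_j,b_j]+\sum_j[\varphi_j^*,b_j^*]$ on $[M,M]$. Both sides annihilate $Z(M)$ (the right side because $[M_*,M]\subseteq Z(M)_\perp$ by Lemma~\ref{lem:0411131}, the left because $\TR(\chi)=0$), so equality holds on all of $M=Z(M)+[M,M]$ and $\chi\in[M_*,M]$. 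Hence $i\psi_1,\psi_2\in[M_*,M]$ and $\psi=-i(i\psi_1)+\psi_2\in[M_*,M]$, giving $\TR^{-1}(0)\subseteq[M_*,M]$; the reverse inclusion was noted in the first paragraph.

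I expect the main obstacle to be the bookkeeping at the center rather than the triple-product identities, which are identical to those already verified. The three delicate points are: verifying that $\TR(\psi)$ is genuinely a trace on $M$, so that its inner derivation vanishes and the central term may be discarded in the forward implication; justifying, via part~(b) of the Remark, the reduction to $\psi^*=-\psi$ on the whole of $Z(M)=L^\infty(\Omega,\mu)1$ rather than merely on $\IC 1$, together with the fact that the correcting functional $\Re\tr_\omega(\psi(\omega))\tr_\omega$ is again a central trace functional leaving $D_\psi$ unchanged; and, in the converse, the passage from an identity valid only on $[M,M]$ to one valid on all of $M$, using that both the functional and its commutator representative vanish on $Z(M)$. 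Once these center-handling steps are in place, the remainder is a routine transcription of the computations in Proposition~\ref{prop:0418131}.
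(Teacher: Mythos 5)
Your proof is correct and follows essentially the same route as the paper's: both implications are the computations of Proposition~\ref{prop:0418131} transcribed with the scalar trace replaced by the center-valued trace, with the center handled exactly as you describe (discarding the tracial corrections $\TR(\psi)$ and $\Re\tr_\omega(\psi(\omega))\tr_\omega$ in one direction, and extending the identity from $[M,M]$ to $M=Z(M)+[M,M]$ via vanishing on $Z(M)$ in the other). The only cosmetic differences are the order of the two implications and your explicit recording of the free inclusion $[M_*,M]\subseteq \TR^{-1}(0)$, which the paper leaves implicit.
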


\begin{proof} 

Suppose that $M$, a finite von Neumann algebra of type $I_n$ admitting a faithful normal finite trace, is normally ternary weakly amenable.   Let $\psi\in M_*$ with $\TR(\psi)=0$.  Suppose first that $\psi^*=-\psi$ so that $D_\psi$ is self adjoint and therefore $D_\psi\circ *$  belongs to $\D_t(M,M)$.   By our assumption, there exist $\varphi_j\in M_*$ and $b_j\in M$ such that
$D_\psi\circ *=\sum_{j=1}^n (L(\varphi_j,b_j)-L(b_j,\varphi_j))$ on $M$.\smallskip

For $x,a\in M$, direct calculations yield
$$ \psi(a^*x-xa^*)=\frac{1}{2}\sum_{j=1}^n(\varphi_jb_j-b_j^*\varphi_j^*)(a^*x)
+\frac{1}{2}\sum_{j=1}^n(b_j\varphi_j-\varphi_j^*b_j^*)(xa^*).$$

We  set $x=1$  to get
\begin{equation}\label{eq:6811}
0=\frac{1}{2}\sum_{j=1}^n(\varphi_jb_j-b_j^*\varphi_j^*)(a^*)
+\frac{1}{2}\sum_{j=1}^n(b_j\varphi_j-\varphi_j^*b_j^*)(a^*),
\end{equation}
and therefore
\begin{equation}
\psi(a^*x-xa^*)=\frac{1}{2}
\sum_{j=1}^n(\varphi_jb_j-b_j^*\varphi_j^*)(a^*x-xa^*),\end{equation} for every $a,x\in M$.\smallskip

Since $M=Z(M)+[M,M]$ and $\psi(Z(M))=0$ (since $\hbox{TR}\, \psi=0$) and $\sum_{j=1}^n(\varphi_jb_j-b_j^*\varphi_j^*)(Z(M))=0$ (by (\ref{eq:6811})) it follows that 
$$\psi=\frac{1}{2}\sum_{j=1}^n(\varphi_jb_j-b_j^*\varphi_j^*)=\frac{1}{2}\sum_{j=1}^n(\varphi_j^*b_j^*-b_j\varphi_j).$$
Hence
\begin{eqnarray*}
2\psi&=&\sum_{j=1}^n(\varphi_jb_j-b_j\varphi_j+b_j\varphi_j-\varphi_j^*b_j^*+\varphi_j^*b_j^*-b_j^*\varphi_j^*)\\
&=&\sum_{j=1}^n[\varphi_j,b_j]-2\psi+\sum_{j=1}^n[\varphi_j^*,b_j^*],
\end{eqnarray*}
which shows that $\psi\in [M_*,M]$.

Now let $\psi\in \TR^{-1}(0)$ and  write $\psi=\psi_1+\psi_2$, where $\psi_1^*=\psi_1$ and $\psi_2^*=-\psi_2$. Since $0=\TR(\psi)=\TR(\psi_1)+\TR(\psi_2)$ and $\TR(\psi_1)$ is real valued and $\TR(\psi_2)$ is purely imaginary valued,  $\TR(\psi_1)=0=\TR(\psi_2)$.   By the previous paragraph, $i\psi_1,\psi_2\in [M_*,M]$ and so $\psi=-i(i\psi_1)+\psi_2\in [M_*,M]$, completing the proof of one direction.

To prove the other direction, suppose that  $\TR^{-1}(0)=[M_*,M]$.
Let $\psi\in M_*$ be such that  the inner derivation $D_\psi:x\mapsto \psi\cdot x-x\cdot\psi$, is self adjoint, that is, $D_\psi\in \Inn_b^*(M,M_*)$. By Lemma~\ref{lem:0815131}, $\psi^*=-\psi$ on $[M,M]$. Let us assume temporarily that  $\psi^*=-\psi$ on $Z(M)$ so that $\psi^*=-\psi$. 


By our main assumption, we  have
\begin{equation}\label{eq:0401131}
\psi=\TR(\psi) +\sum_j[\varphi_j+i\xi_j,c_j+id_j]
\end{equation}
where $\varphi_j,\xi_j$ are self adjoint elements of $M_*$ and $c_j,d_j$ are self adjoint elements of $M$.
Expanding the right side of (\ref{eq:0401131}) and using the fact that $\psi^*=-\psi$, we have
\[
\psi=\TR(\psi) +\sum_j([\varphi_j,c_j]-[\xi_j,d_j]).
\]
It is easy to check that for $\varphi\in M_*$ and $c=c^*,x,y\in M$,
\[
[\varphi,c] ([x^*,y])=\tpc{\varphi}{2c}{x}(y)-\tpc{2c}{\varphi}{x}(y).
\]
Thus
 \begin{equation}
 D_\psi\circ *=\sum_j\left(L(\varphi_j,2c_j)-L(2c_j,\varphi_j)-L(\xi_j,2d_j)+L(2d_j,\xi_j)\right),\end{equation}
 which belongs to $\Inn_t(M,M_*)$.

By replacing $\psi$ by $\psi^{\prime}$, where $\psi^{\prime}(\omega)=\psi(\omega)-\Re\tr(\psi(\omega))1_n$, we have  $D_\psi=D_{\psi^{\prime}}$
and $(\psi')^*=-\psi'$ so that $D_\psi\circ *=D_{\psi'}\circ *
\in \Inn_t(M,M_*)$.    From (\ref{eq:0822131}), $\mathcal{D}_t(A, A^*)=\Inn_t(M,M_*)$ proving that $M$ is normally ternary weakly amenable.
\end{proof}

\begin{corollary} Let $M$ be a finite von Neumann algebra of type $I_n$ admitting a faithful normal finite trace $\tr$.
If $\tr^{-1}(0)=[M_*,M]$, then $M$ is normally ternary weakly amenable.
\end{corollary}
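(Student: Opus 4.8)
The plan is to deduce the corollary directly from Proposition~\ref{prop:0326141} by squeezing the set $\TR^{-1}(0)$ between $[M_*,M]$ and $\tr^{-1}(0)$. Since Proposition~\ref{prop:0326141} characterizes normal ternary weak amenability of $M$ as the \emph{equality} $\TR^{-1}(0)=[M_*,M]$, all that is needed is to upgrade the scalar-trace hypothesis $\tr^{-1}(0)=[M_*,M]$ to the center-valued statement. The organizing observation is that for a finite type $I_n$ algebra one always has the chain
\[
[M_*,M]\subseteq \TR^{-1}(0)\subseteq \tr^{-1}(0),
\]
so the hypothesis $\tr^{-1}(0)=[M_*,M]$ collapses the chain and forces $\TR^{-1}(0)=[M_*,M]$.

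For the first inclusion I would note that every commutator in $M_*$ annihilates the center: for $\varphi\in M_*$, $b\in M$, and $z\in Z(M)$ one has $[\varphi,b](z)=\varphi(bz-zb)=0$ because $z$ is central. The displayed computation in the proof of the Remark preceding Proposition~\ref{prop:0326141} shows (in both directions) that $\TR(\psi)=0$ if and only if $\psi$ vanishes on $Z(M)$; hence each $[\varphi,b]$ lies in $\TR^{-1}(0)$, giving $[M_*,M]\subseteq \TR^{-1}(0)$. For the second inclusion I would use that $1\in Z(M)$: if $\TR(\psi)=0$ then $\psi$ vanishes on all of $Z(M)$, in particular $\psi(1)=0$, and by the identification $\tr^{-1}(0)=(M_*)_0=\{\psi:\psi(1)=0\}$ recorded at the start of this section this means $\psi\in \tr^{-1}(0)$. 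Thus $\TR^{-1}(0)\subseteq \tr^{-1}(0)$.

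Combining the two inclusions with the hypothesis yields
\[
[M_*,M]\subseteq \TR^{-1}(0)\subseteq \tr^{-1}(0)=[M_*,M],
\]
so $\TR^{-1}(0)=[M_*,M]$, and Proposition~\ref{prop:0326141} then gives that $M$ is normally ternary weakly amenable. I do not anticipate a genuine obstacle: the argument is bookkeeping between the two trace notions. The one point demanding care is the precise meaning of the extended traces on $M_*$, and in particular confirming that vanishing of the center-valued trace is the \emph{stronger} condition (vanishing on the whole of $Z(M)$, not merely at $1$), so that $\TR^{-1}(0)$ is the smaller set and the squeeze runs in the correct direction.
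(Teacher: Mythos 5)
Your proof is correct and coincides with the argument the paper leaves implicit: the corollary is deduced from Proposition~\ref{prop:0326141} by noting that $[M_*,M]\subseteq \TR^{-1}(0)\subseteq \tr^{-1}(0)$ always holds (sums of commutators annihilate $Z(M)$, hence have vanishing central trace by the Remark, and $\TR(\psi)=0$ forces $\psi(1)=0$), so the hypothesis $\tr^{-1}(0)=[M_*,M]$ collapses the chain to $\TR^{-1}(0)=[M_*,M]$. This is exactly the intended deduction, and your care about the direction of the squeeze is well placed.
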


\begin{problem}
Does Theorem 1(b) hold for finite von Neumann algebras on non separable Hilbert spaces?
\end{problem}
\begin{problem}
Does
 Theorem 2 hold for all properly infinite von Neumann algebras?
\end{problem}

\begin{problem}\label{prob:3}
Is a finite countably decomposable  von Neumann algebra of type I normally ternary weakly amenable?
\end{problem}

\begin{problem}\label{prob:4}
For  a factor $M$ of type $III$, do we have $\{1\}_\perp=[M_*,M]$?
\end{problem}

\begin{bibdiv}
\begin{biblist}


\bib{BroPeaTopDuke68}{article}{
   author={Brown, Arlen}, 
   author={Pearcy, Carl},
   author={Topping, David},
      title={Commutators and the strong radical},
   journal={Duke Math. J.)},
   volume={35},
   date={1968},
   pages={853--859},
}

\bib{BunPas80}{article}{
   author={Bunce, J. W.}, 
   author={Paschke, W. L.},
         title={Derivations on a C$^*$-algebra and its
double dual},
   journal={J. Funct. Analysis},
   volume={37},
   date={1980},
   pages={235--247},
}

\bib{DykKal05}{article}{
   author={Dykema, Kenneth J.}, 
   author={Kalton, Nigel J.},
      title={Sums of commutators in ideals and modules of type II factors},
   journal={Ann. Inst. Fourier (Grenoble)},
   volume={55},
   number={3}
   date={2005},
   pages={931--971},
}

\bib{FacdelaHar80}{article}{
   author={Fack, Th.}, 
   author={de la Harpe, P.},
      title={Sommes de commutateurs dans les algbres de von Neumann finies continues},
   journal={Ann. Inst. Fourier (Grenoble)},
   volume={30},
   number={3},
   date={1980},
   pages={49--73},
}

\bib{Haagerup83}{article}{
   author={Haagerup, Uffe}, 
         title={All nuclear C$^*$-algebras are amenable},
   journal={Invent. Math.},
   volume={74},
   number={2},
   date={1983},
   pages={305--319},
}

\bib{HalmosAJM52}{article}{
   author={Halmos, Paul R.},
   title={Commutators of operators},
   journal={Amer. J. Math.},
   volume={74},
   date={1952},
   pages={237--240},
 }

\bib{HalmosAJM54}{article}{
   author={Halmos, Paul R.},
   title={Commutators of operators. II},
   journal={Amer. J. Math.},
   volume={76},
   date={1954},
   pages={191--198},
 }

\bib{HalpernTAMS69}{article}{
   author={Halpern, Herbert},
   title={Commutators in properly infinite von Neumann algebras},
   journal={Trans. Amer. Math. Soc.},
   volume={139},
   date={1969},
   pages={55--73},
 }

\bib{HoMarPerRus02}{article}{
   author={Ho, Tony}, 
   author={Martinez-Moreno, J.},
   author={Peralta, Antonio M.},
   author={Russo, Bernard},
   title={Derivations on real and complex JB$^\ast$-triples},
   journal={J. London Math. Soc.},
   volume={65},
   number={1},
   date={2002},
   pages={85--102},
}

\bib{HoPerRus12}{article}{
   author={Ho, Tony}, 
   author={Peralta, Antonio M.},
   author={Russo, Bernard},
   title={Ternary weakly amenable C$^*$-algebras and JB$^*$-triples},
   journal={Quarterly J. Math.},
  volume={64},
  note={First published online: November 28, 2012, doi: 10.1093/qmath/has032},
   number={4},
   date={2013},
   pages={1109--1139},
}


\bib{Kadison66}{article}{
   author={Kadison, Richard V. },
   title={Derivations of operator algebras},
   journal={Ann. of Math.},
   volume={83},
   number={2},
   date={1966},
   pages={280--293},
 }




\bib{Pearcy62}{article}{
   author={Pearcy, Carl}, 
      title={A complete set of unitary invariants for operators generating finite W*-algebras of type I},
   journal={Pacific J. Math.},
   volume={12},
   date={1962},
   pages={1405--1416},
}

\bib{PeaTop69}{article}{
   author={Pearcy, Carl}, 
   author={Topping, David},
   title={Commutators and certain $II_1$ factors},
   journal={J. Funct. Anal.},
   volume={3},
   date={1969},
   pages={69--78},
}


 \bib{PerRus10}{article}{
     author={Peralta, Antonio M.},
   author={Russo, Bernard},
   title={Automatic continuity of derivations on
C$^*$-algebras and JB$^*$-triples},
   journal={Journal of Algebra},
   volume={399},
   number={1},
   date={2014},
   pages={960--977},
}

\bib{Sakai66}{article}{
   author={Sakai, Shoichiro},
   title={Derivations of $W^{\ast} $-algebras},
   journal={Ann. of Math.},
   volume={83},
   number={2},
   date={1966},
   pages={273--279},
 }


\bib{TakesakibookI}{book}{
   author={Takesaki, M.},
   title={Theory of Operator Algebras I},
   series={Encyclopaedia of Mathematical Sciences},
   volume={124},
   note={Operator Algebras and Non-commutative Geometry, 6},
   publisher={Springer-Verlag},
   place={Berlin},
   date={2001},
   pages={xIx+415},
   isbn={978-3-540-42248-8},
}

\bib{TakesakibookII}{book}{
   author={Takesaki, M.},
   title={Theory of Operator Algebras II},
   series={Encyclopaedia of Mathematical Sciences},
   volume={125},
   note={Operator Algebras and Non-commutative Geometry, 6},
   publisher={Springer-Verlag},
   place={Berlin},
   date={2003},
   pages={xxii+518},
   isbn={3-540-42914-X},
   review={\MR{1943006 (2004g:46079)}},
}

\bib{Upmeier80}{article}{
   author={Upmeier, H.},
   title={Derivations of Jordan C$^*$-algebras},
   journal={Math. Scand.},
   volume={46},
   date={1980},
   pages={251--264},
 }

\bib{Weiss80}{article}{
   author={Weiss, Gary},
   title={Commutators of Hilbert Schmidt operators II},
   journal={Integral Equations and Operator Theory},
   volume={3/4},
   date={1980},
   pages={574--600},
}

\bib{Weiss86}{article}{
   author={Weiss, Gary},
   title={Commutators of Hilbert Schmidt operators I},
   journal={Integral Equations and Operator Theory},
   volume={9},
   date={1986},
   pages={877--892},
}

\bib{Weiss04}{article}{
   author={Weiss, Gary},
   title={$B(H)$-Commutators:  A Historical Survey},
   journal={Operator Theory: Advances and Applications},
   volume={153},
   date={2004},
   pages={307--329},
 }

\end{biblist}
\end{bibdiv}

\end{document}